\documentclass[12pt]{amsart}
\usepackage{}

\usepackage{amsmath}
\usepackage{amsfonts}
\usepackage{amssymb}
\usepackage[all]{xy}           

\usepackage{bbding}
\usepackage{txfonts}
\usepackage{amscd}
\usepackage{mathrsfs}

\usepackage[shortlabels]{enumitem}
\usepackage{ifpdf}
\ifpdf
  \usepackage[colorlinks,final,backref=page,hyperindex]{hyperref}
\else
  \usepackage[colorlinks,final,backref=page,hyperindex]{hyperref}
\fi
\usepackage{tikz}
\usepackage[active]{srcltx}

\makeatletter

\newtheorem{thm}{Theorem}[section]

\newtheorem{cor}[thm]{Corollary}
\newtheorem{pro}[thm]{Proposition}
\newtheorem{ex}[thm]{Example}
\newtheorem{rmk}[thm]{Remark}
\newtheorem{defi}[thm]{Definition}

\newcommand {\emptycomment}[1]{}

\newcommand{\be }{\begin{equation}}
\newcommand{\ee }{\end{equation}}

\newcommand{\Comp}{\mathbb C}

\newcommand{\huaA}{\mathcal{A}}

\newcommand{\br}[1]{   [ \cdot,    \cdot  ]   }

\newcommand{\Nat}{\mathbb N}

\begin{document}

\title[Pre-vertex algebras, pre-Poisson vertex algebras and their deformation quantizations ]{Pre-vertex algebras, pre-Poisson vertex algebras and their deformation quantizations}

\author{Jiabao Xu}
\address{School of Mathematics and Statistics, Northeast Normal University, Changchun 130024, China}
\email{xujb624@nenu.edu.cn}

\author{Jiefeng Liu}
\address{School of Mathematics and Statistics, Northeast Normal University, Changchun 130024, China}
\email{liujf534@nenu.edu.cn}
\vspace{-5mm}


\begin{abstract}
In this paper, we introduce pre-vertex algebras and pre-Poisson vertex algebras as 
$\mathfrak{preLie}$-algebras in the chiral and classical pseudo-tensor categories, respectively. We show that a pre-vertex algebra gives rise to a vertex algebra, that every Rota–Baxter operator on a vertex algebra induces a pre-vertex algebra,  and that  every pre-vertex algebra can be embedded into a vertex algebra equipped with  a Rota–Baxter operator. Moreover, we prove that pre-vertex algebras are equivalent to dendriform vertex algebras.  In  the Poisson setting, we demonstrate that pre-Poisson vertex algebras are obtained from Rota–Baxter operators on Poisson vertex algebras, from filtrations of pre-vertex algebras, and as classical limits of pre-vertex formal deformations of differential Zinbiel algebras. This extends the classical relationships among Rota–Baxter operators, pre-Lie algebras, and Poisson algebras to the framework of vertex algebras.
\end{abstract}


\keywords{pre-vertex algebra, pre-Poisson vertex algebra, Rota-Baxter operator, dendriform vertex algebra, quantization}
\footnotetext{{\it{MSC}}: 17B63, 17B69, 17B38, 17A30 }
\maketitle



\allowdisplaybreaks


\section{Introduction}\label{sec:intr}
The notion of a vertex algebra was first axiomatized by R. Borcherds in the theory of monstrous moonshine \cite{Bor}, which provides a rigorous mathematical definition of the chiral part of a two-dimensional quantum field theory \cite{BPZ}. It has since developed in conjunction with string theory and infinite-dimensional Lie algebras, with applications to the geometric Langlands program. For more details on vertex algebras and their applications, see \cite{FB,FLM,Kac}. In \cite{Bakalov}, Bakalov and Kac gave an equivalent description of vertex algebras in terms of Lie conformal algebras and differential pre-Lie algebras. The notion of a Lie conformal algebra (also called a vertex Lie algebra in \cite{Pri}) was introduced by Kac \cite{Kac} as an algebraic language to encode the properties of operator product expansions in conformal field theory. Pre-Lie algebras (or left-symmetric algebras) are a class of nonassociative algebras arising from the study of convex homogeneous cones, affine manifolds, affine structures on Lie groups, and deformations of associative algebras; they have since appeared in many fields of mathematics and mathematical physics. See the survey paper \cite{Bai21} and the references therein for more details. In their seminal book \cite{BD}, Beilinson and Drinfeld generalized the notion of a vertex algebra by introducing chiral algebras in the language of $\mathfrak{D}$-modules on an arbitrary smooth algebraic curve; a vertex algebra then becomes a weakly translation-covariant chiral algebra on the affine line. In \cite{BDHK,DK06}, using the $\lambda$-bracket description of vertex algebras, the authors translate Beilinson and Drinfeld's construction of the chiral operad into the purely algebraic language of vertex algebras. In this setting, a vertex algebra is a $\mathfrak{Lie}$-algebra in the chiral pseudo-tensor category $\mathcal{C}^{cl}$. Consequently, the general construction of a cohomology complex associated to a linear operad yields the vertex algebra cohomology complex. For other versions of cohomology theories of vertex algebras, see \cite{Huang1,Huang2}.

Poisson vertex algebras, also known as Coisson algebras in the language of \cite{BD}, arise as the classical limit of vertex algebras \cite{FB} and provide an algebraic framework for the theory of Hamiltonian evolutionary partial differential equations \cite{BSK}. They generalize the notion of Poisson algebras to the setting of Lie conformal algebras, encoding the local Poisson bracket of local functionals via the Poisson vertex algebra $\lambda$-bracket. Consequently, they have become central objects in the algebraic theory of integrable systems and the formal calculus of variations. In \cite{BDHK}, the authors give an operadic description of Poisson vertex algebras and show that a Poisson vertex algebra is precisely a $\mathfrak{Lie}$-algebra in the classical pseudo-tensor category $\mathcal{P}^{cl}$. Cohomology theories for Poisson vertex algebras were systematically developed in \cite{BDHKV,BDK,BDK2,DK13,DK11b}. In \cite{LHS}, Li introduced the notion of a good increasing filtration associated to a filtered vertex algebra and proved that the associated graded vector space of a filtered vertex algebra naturally carries a Poisson vertex algebra structure. In particular, the universal enveloping vertex algebra $U(L)$ of a Lie conformal algebra $L$ admits a Poisson vertex algebra structure.

Rota–Baxter operators on vertex algebras, formulated in terms of state-field correspondences, were introduced by Bai, Guo, Liu, and Wang in \cite{BGL2} as the integral counterpart of derivations on vertex algebras and they give rise to solutions of the classical Yang–Baxter equations on vertex algebras \cite{BGL1}. The concept of Rota–Baxter operators on associative algebras was first introduced in the 1960s by Baxter \cite{Bax} and Rota \cite{Rot} in their study of fluctuation theory in probability and combinatorics. For Lie algebras, these operators were rediscovered in the 1980s as operator forms of the classical Yang–Baxter equation and the modified classical Yang–Baxter equation. Furthermore, Rota–Baxter operators on operads induce splittings of operads \cite{BBGN,PBG}, which generalize the relationships from Rota–Baxter algebras to dendriform algebras \cite{Agu01} and from Rota–Baxter Lie algebras to pre-Lie algebras \cite{AB}.   See \cite{GK13} for more details on the embedding of dendriform type algebras into  Rota-Baxter algebras.  Generalizing the connection between Rota–Baxter algebras and dendriform algebras, the authors in  \cite{BGL2} introduced the notion of dendriform vertex algebras, which can be viewed as the splitting of Rota–Baxter actions on vertex algebras. On the other hand, Bakalov and Kac used Lie conformal algebras and pre-Lie algebras to give an equivalent definition of vertex algebras, which can be interpreted as a $\mathfrak{Lie}$-algebra in the chiral pseudo-tensor category $\mathcal{C}^{cl}$. It is therefore natural to ask whether Rota–Baxter operators can be defined on vertex algebras in the sense of Bakalov and Kac's definition, what the corresponding notion of dendriform vertex algebra would be in this setting, and whether such algebras can be interpreted as a $\mathfrak{preLie}$-algebra in the chiral pseudo-tensor category $\mathcal{C}^{cl}$.

The first main goal of this paper is to answer these questions. We define Rota–Baxter operators on Bakalov–Kac type vertex algebras and introduce the notion of a pre-vertex algebra, which consists of a pre-Lie conformal algebra together with a differential L-dendriform algebra. Recall that pre-Lie conformal algebras (also called left-symmetric conformal algebras) were introduced by Hong and Li \cite{HL} as a conformal analogue of pre-Lie algebras; they are useful for constructing new vertex algebras. On the other hand, L-dendriform algebras were introduced by Bai, Liu and Ni \cite{BLN} as a splitting of Rota–Baxter actions on pre-Lie algebras. Our main results are as follows. We show that every pre-vertex algebra gives rise to a vertex algebra, and conversely, a Rota–Baxter operator on a vertex algebra induces a pre-vertex algebra. Moreover, every pre-vertex algebra can be embedded into a vertex algebra with a Rota–Baxter operator. In particular, any differential Zinbiel algebra is a pre-vertex algebra. This relationship generalizes the classical connection between Rota–Baxter operators on Lie algebras and pre-Lie algebras. Furthermore, although dendriform vertex algebras and pre-vertex algebras arise from different  perspectives and appear distinct, we prove that they are in fact equivalent. This equivalence extends the well‑known equivalence between the two definitions of vertex algebras – one in terms of 
$\lambda$-brackets and the other in terms of the state‑field correspondence. In this proof, the Rota–Baxter operator on vertex algebras plays a crucial role. Recently, as noted by Kolesnikov in their paper \cite{Kol}, our definition of a pre-vertex algebra coincides precisely with that of a $\mathfrak{preLie}$-algebra in the chiral pseudo tensor category $\mathcal{C}^{cl}$. From a categorical perspective, the pre-vertex algebra thus arises naturally and therefore deserves closer attention.

The second main goal of this paper is to study pre-Poisson vertex algebras and their relationship with pre-vertex algebras. A pre-Poisson vertex algebra consists of a pre-Lie conformal algebra together with a differential Zinbiel algebra. Such an object can be viewed as a  $\mathfrak{preLie}$-algebra  in the classical pseudo-tensor category, and it generalizes the notion of a pre-Poisson algebra introduced by Aguiar \cite{Aguiar}. The significance of pre-Poisson vertex algebras lies in the fact that they naturally give rise to Poisson vertex algebras. We also introduce the notion of a Rota–Baxter operator on a Poisson vertex algebra and show that such an operator induces a pre-Poisson vertex algebra. Moreover, any pre-Poisson vertex algebra can be embedded into a Poisson vertex algebra equipped with a Rota–Baxter operator. In addition, just as a filtration of a vertex algebra gives rise to a Poisson vertex algebra, we introduce the notion of a filtration of a pre-vertex algebra and prove that it also yields a pre-Poisson vertex algebra. Finally, we introduce the concept of pre-vertex formal deformations of a differential Zinbiel algebra and demonstrate that pre-Poisson vertex algebras arise as the corresponding classical limits. This extends the well-known fact that a Poisson vertex algebra is the classical limit of a vertex algebra

The paper is organized as follows. Section \ref{sec:pre-vertex} sets the stage by recalling the two equivalent definitions of vertex algebras (via 
$\lambda$-brackets and via state-field correspondence). We then introduce pre-vertex algebras and Rota–Baxter operators on Bakalov–Kac type vertex algebras, and analyze the interconnections among pre-vertex algebras, vertex algebras, and Rota–Baxter operators. The section concludes by establishing the equivalence between pre-vertex algebras and dendriform vertex algebras. Section \ref{sec:pre-Poisson} first recalls the notion of a Poisson vertex algebra, then introduces pre-Poisson vertex algebras and Rota–Baxter operators on Poisson vertex algebras. We show that every pre-Poisson vertex algebra yields a Poisson vertex algebra, and conversely, a Rota–Baxter operator on a Poisson vertex algebra induces a pre-Poisson vertex algebra. Section \ref{sec:def} introduces filtrations of pre-vertex algebras and proves that such a filtration gives rise to a pre-Poisson vertex algebra. Finally, we define pre-vertex formal deformations of a differential Zinbiel algebra and demonstrate that the classical limit of such a deformation is a pre-Poisson vertex algebra.

\section{Pre-vertex algebras and dendriform vertex algebras}\label{sec:pre-vertex}
In this section, we first recall the two equivalent definitions of vertex algebras: one in terms of $\lambda$-brackets and the other via state-field correspondences. We then introduce the notion of a pre-vertex algebra, which corresponds to the pre-Lie chiral algebra introduced by Kolesnikov. We show that pre-vertex algebras can be induced by Rota–Baxter operators on vertex algebras. Finally, we compare pre-vertex algebras with dendriform vertex algebras and prove that they are equivalent, thereby generalizing the equivalence between the two aforementioned definitions of vertex algebras.
\subsection{Vertex aglebras and pre-vertex aglebras}
Using the language of $\lambda$-brackets, Bakalov and Kac \cite{Bakalov} give an equivalent description of vertex algebras in terms of Lie conformal algebras and differential pre-Lie algebras.

Recall \cite{Kac} that a {\bf Lie conformal algebra} $L$ is an $\mathbb{C}[\partial]$-module
endowed with a $\lambda$-bracket $[\cdot\,_\lambda\,\cdot]:\,L\otimes L\to L[\lambda]$
satisfying: for $x,y,z\in L$, 
\begin{itemize}
	\item[(i)] sesquilinearity:
	$[\partial x_\lambda y]=-\lambda[y_\lambda x]$,
	$[x_\lambda \partial y]=(\partial+\lambda)[x_\lambda y]$;
	\item[(ii)] skewsymmetry:
	$[y_\lambda x]=-[x_{-\lambda-\partial} y]$;
	\item[(iii)] Jacobi identity:
	$[x_\lambda[y_\mu z]]-[y_\mu[x_\lambda z]]=[[x_\lambda y]_{\lambda+\mu}z]$.
\end{itemize}

Recall that a {\bf pre-Lie algebra} is a pair $(A, \circ)$, where $A$ is a vector space and $\circ : A \otimes A \to A$ is a bilinear multiplication satisfying
\begin{equation}\label{eq:pre-Lie law}
	(x \circ y) \circ z - x \circ (y \circ z) = (y \circ x) \circ z - y \circ (x \circ z),\quad \forall x, y, z \in A.
\end{equation}
Let $(A,\circ)$ be a pre-Lie algebra. Then the new bracket $[\cdot,\cdot]:A\otimes A\to A$ defined by
\begin{equation}\label{eq:pre-Lie-Lie}
	[x,y]=x\circ y-y\circ x,\quad \forall x,y\in A
\end{equation}
makes $A$ become a Lie algebra, which is called the subadjacent Lie algebra of the pre-Lie algebra $(A,\circ)$. A \textbf{differential pre-Lie algebra} is a pre-Lie algebra $(A, \circ)$ equipped with a linear map $\partial : A \to A$ satisfying	$\partial(x \circ  y) = (\partial x) \circ y + x \circ  (\partial y)$ for all $x,y\in A$. 

As the properties of the identity element are not relevant to this paper, all definitions are considered in the non-unital case. The definition of a non-unital vertex algebra introduced by Bakalov and Kac is as follows.

\begin{defi}{\rm (\cite{Bakalov})}\label{def:VA}
	A \textbf{non-unital vertex algebra} is a quadruple $(A,  \circ, [\cdot\,_\lambda\,\cdot],\partial)$, where $(A, [\cdot\,_\lambda\,\cdot], \partial)$ is a Lie conformal algebra and $(A,\circ, \partial)$ is a differential pre-Lie algebra, such that the following identities hold:
	\begin{align}
		\label{eq:VA1}x \circ y - y \circ x &=\int_{-\partial}^{0} [x_{\lambda}y] \, d\lambda,\\
		\label{eq:VA2}[x_{\lambda}(y \circ z)] - [x_{\lambda}y] \circ z - y \circ [x_{\lambda}z] &=\int_{0}^{\lambda} [[x_{\lambda}y]_{\mu}z] \, d\mu, \quad \forall \, x, y, z \in A.
	\end{align}
\end{defi}

In the following, we recall the second definition of non-unital vertex algebra, which is also called vertex algebra without vacuum. 
\begin{defi}{\rm(\cite{FLM,HuL,Kac})}\label{def:VA2}
A {\bf non-unital vertex algebra } is a vector space $ V$, equipped with a linear map 
$$ Y : V \to ({\rm End~ V})[[t, t^{-1}]],\quad x\mapsto Y(x,t)=\sum_{n\in \mathbb{Z}} x_{(n)} t^{-n-1},\quad  x_{(n)}\in {\rm End~ V}$$ 
called the state-field correspondence, and a linear operator $ D : V \to V $ satisfying: 
\begin{itemize}
	\item[\rm(i)] {\rm (truncation property)} For any given \(x, y \in V\), $x_{(n)} y = 0$ when $n$ is sufficiently large,
	\item[\rm(ii)]{\rm (Jacobi identity)} For $x, y \in V$,
	$$
	t_0^{-1} \delta \left( \frac{t_1 - t_2}{t_0} \right) Y(x, t_1) Y(y, t_2) - t_0^{-1} \delta \left( \frac{-t_2 + t_1}{z_0} \right) Y(y, t_2) Y(x, t_1) = t_2^{-1} \delta \left( \frac{t_1 - t_0}{t_2} \right) Y(Y(x, t_0)y, t_2),
	$$
	\item[\rm(iii)]{\rm ($D$-derivative property)} For $x \in V$, $Y(Dx, t) = \frac{d}{dt} Y(x, t)$,
	\item[\rm(iv)]{\rm (skewsymmetry)}	$Y(x, t)y = e^{zD}Y(y, -t)x.\quad x,y\in V$.
\end{itemize}
We denote a non-unital vertex algebra by $ (V, Y, D) $.
	\end{defi}

The equivalence between the non-unital vertex algebra defined via the $\lambda$-bracket language (Definition \ref{def:VA}) and the non-unital vertex algebra defined via the state-field correspondence (Definition \ref{def:VA2}) is given by  

$$
[x_\lambda y] = \sum_{n \in \mathbb{Z}_+} \frac{\lambda^n}{n!} x_{(n)} y, \quad 
x \circ y = x_{(-1)} y, \quad  x_{(-n-1)} y=\frac{1}{n!}(\partial^n x) \circ y \quad (n \in \mathbb{Z}_+),\quad 
\partial = D.
$$

For further details on the equivalence among various definitions of vertex algebras, see \cite{Bakalov, DK06}.
 
Recall \cite{HL} that a {\bf pre-Lie conformal algebra} (also called a left-symmetric conformal algebra) is an $\mathbb{C}[\partial]$-module $A$
endowed with a $\lambda$-operation $\cdot\,_\lambda\,\cdot:\,A\otimes A\to A[\lambda]$
satisfying: for $x,y,z\in A$, 
\begin{align}
	(\partial x)_\lambda y = -\lambda x_\lambda y,&\quad x_\lambda (\partial y) = (\partial + \lambda)x_\lambda y,\label{eq:conf}\\
	(x_\lambda y)_{\lambda+\mu} z - x_\lambda (y_\mu z) &= (y_\mu x)_{\lambda+\mu} z - y_\mu (x_\lambda z). \label{eq:lsc}
\end{align}
Let $A$ be a pre-Lie conformal algebra. Then the new $\lambda$-bracket $[\cdot\,_\lambda\,\cdot]:A\otimes A\to A[\lambda]$ defined by 
\begin{equation}\label{eq:PLCA-LCA}
	[x_\lambda y] = x_\lambda y - y_{-\lambda-\partial}x, \quad \forall x, y \in A
\end{equation}
makes $A$ become a Lie conformal algebra, which is called a subadjacent Lie conformal algebra of $A$.

Recall \cite{BLN} that an {\bf L-dendriform algebra} is a vector space $A$ equipped with two binary operations $\triangleright,\triangleleft
: A \otimes A \to A$ satisfying 
\begin{eqnarray}
	\label{eq:L-den1}	x \triangleright (y \triangleright z) &=& (x \triangleright y) \triangleright z + (x \triangleleft y) \triangleright z +y \triangleright (x \triangleright z)- (y \triangleleft x) \triangleright z - (y \triangleright x) \triangleright z,\\
	\label{eq:L-den2}	x \triangleright (y \triangleleft z) &=& (x \triangleright y) \triangleleft z + y \triangleleft (x \triangleright z) + y \triangleleft (x \triangleleft z) - (y \triangleleft x) \triangleleft z.
\end{eqnarray}
Let $(A, \triangleright, \triangleleft)$ be an $L$-dendriform algebra. Then the linear map $\circ : A \otimes A \to A$ defined by  
\begin{equation}\label{eq:pre-Lie op}
	x \circ y = x \triangleright y + x \triangleleft y, \, \forall x, y \in A,
\end{equation}
makes $A$ become a pre-Lie algebra. 

\begin{defi}
	A {\bf differential L-dendriform algebra} is a quadruple $(A, \triangleright, \triangleleft, \partial)$, where $(A, \triangleright, \triangleleft)$ is an L-dendriform algebra and $\partial: A \to A$ is a linear operator satisfying 
	$$
	\partial (x \triangleright y) = (\partial x) \triangleright y + x \triangleright (\partial y),\qquad 
	\partial (x \triangleleft y) = (\partial x) \triangleleft y + x \triangleleft (\partial y),\quad \forall x,y\in A.
	$$
\end{defi}
It is clear that a differential L-dendriform algebra gives a differential pre-Lie algebra $(A,\circ,\partial)$, where the pre-Lie operation $\circ$ is given by \eqref{eq:pre-Lie op}.

Combining pre-Lie conformal algebras and differential L-dendriform algebras, we give the notion of a non-unital pre-vertex algebra.
\begin{defi}\label{def:pre-VA}
	A \textbf{ non-unital  pre-vertex algebra} is a quintuple $(A,\triangleright,\triangleleft,\cdot\,_{\lambda}\,\cdot,\partial)$, where $(A,\cdot\,_{\lambda}\,\cdot,\partial)$ is a pre-Lie conformal algebra and $(A,\triangleright,\triangleleft,\partial)$ is a differential L-dendriform algebra, such that the following identities hold: for $x,y,z\in A$
	\begin{eqnarray}
		\label{eq:pre-VA1}x\triangleright y-y\triangleleft x&=&\int_{-\partial}^{0} x_{\lambda}y \, d\lambda,\\
		\label{eq:pre-VA2}(y\triangleright z+y\triangleleft z)_{-\lambda - \partial} x&=&(y_{-\lambda - \partial} x)\triangleleft z+y\triangleright(z_{-\lambda - \partial} x)-\int_{0}^{\lambda}z_{-\mu - \partial}(y_{-\lambda - \partial} x) \,d\mu,\\
		\label{eq:pre-VA3}x_{\lambda} (y\triangleright z)&=&(x_{\lambda} y-y_{-\lambda - \partial} x)\triangleright z+y\triangleright (x_{\lambda} z)+\int_{0}^{\lambda}(x_{\lambda} y-y_{-\lambda - \partial} x)_\mu z\,d\mu,\\
		\label{eq:pre-VA4}x_{\lambda}(y\triangleleft z)&=&y\triangleleft (x_{\lambda} z-z_{-\lambda - \partial} x)+(x_{\lambda}y)\triangleleft z-\int_{0}^{\lambda}z_{-\mu - \partial}(x_{\lambda}y)\,d\mu.
	\end{eqnarray}
\end{defi}

\begin{pro}\label{prop:pre-vertex make vertex}
	Let $(A,\triangleright,\triangleleft,\cdot\,_{\lambda}\,\cdot,\partial)$ be a non-unital  pre-vertex algebra. Define
	$$[x_{\lambda}y]=x_{\lambda} y-y_{-\lambda - \partial} x\quad \mbox{and} \quad x \circ y=x\triangleright y+x\triangleleft y.$$
	Then $(V, \circ, [\cdot\,_{\lambda}\,\cdot],\partial)$ is a non-unital vertex algebra, which is called a subadjacent non-unital vertex algebra of $A$.
\end{pro}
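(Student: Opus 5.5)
We check the axioms of Definition~\ref{def:VA} one at a time. Two ingredients are already available from the recollections. First, $(A,[\cdot\,_\lambda\,\cdot],\partial)$ is a Lie conformal algebra, because it is the subadjacent Lie conformal algebra \eqref{eq:PLCA-LCA} of the pre-Lie conformal algebra $(A,\cdot\,_\lambda\,\cdot,\partial)$. Second, $(A,\circ,\partial)$ is a differential pre-Lie algebra, because $\circ$ is the pre-Lie operation \eqref{eq:pre-Lie op} of the differential L-dendriform algebra $(A,\triangleright,\triangleleft,\partial)$. It therefore remains to verify the compatibility identities \eqref{eq:VA1} and \eqref{eq:VA2}.

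For \eqref{eq:VA1}, we write
\[
x\circ y-y\circ x=(x\triangleright y-y\triangleleft x)-(y\triangleright x-x\triangleleft y).
\]
Applying \eqref{eq:pre-VA1} to both brackets gives
\[
\int_{-\partial}^0 x_\lambda y\,d\lambda-\int_{-\partial}^0 y_\lambda x\,d\lambda.
\]
The second integral is rewritten using the standard identity
\[
\int_{-\partial}^0 y_\lambda x\,d\lambda=\int_{-\partial}^0 y_{-\lambda-\partial}x\,d\lambda,
\]
which follows from the substitution $\lambda\mapsto-\lambda-\partial$ and the convention that $\partial$ acts on the coefficients placed to the left. Combining the two integrals then yields $\int_{-\partial}^0[x_\lambda y]\,d\lambda$. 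This is a routine manipulation of the kind used by Bakalov--Kac.

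For \eqref{eq:VA2}, we expand
\[
[x_\lambda(y\circ z)]=x_\lambda(y\triangleright z)+x_\lambda(y\triangleleft z)-(y\triangleright z+y\triangleleft z)_{-\lambda-\partial}x.
\]
We substitute \eqref{eq:pre-VA3} for the first term, \eqref{eq:pre-VA4} for the second, and \eqref{eq:pre-VA2} for the third. The non-integral terms then regroup as follows:
\begin{itemize}
\item $[x_\lambda y]\triangleright z$ comes from \eqref{eq:pre-VA3}.
\item $(x_\lambda y)\triangleleft z-(y_{-\lambda-\partial}x)\triangleleft z=[x_\lambda y]\triangleleft z$ comes from \eqref{eq:pre-VA4} together with \eqref{eq:pre-VA2}.
\item $y\triangleright(x_\lambda z)-y\triangleright(z_{-\lambda-\partial}x)=y\triangleright[x_\lambda z]$ comes from \eqref{eq:pre-VA3} together with \eqref{eq:pre-VA2}.
\item $y\triangleleft[x_\lambda z]$ comes from \eqref{eq:pre-VA4}.
\end{itemize}
Together these give $[x_\lambda y]\circ z+y\circ[x_\lambda z]$.

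The integral terms that remain are
\[
\int_0^\lambda\Big((x_\lambda y-y_{-\lambda-\partial}x)_\mu z-z_{-\mu-\partial}(x_\lambda y)+z_{-\mu-\partial}(y_{-\lambda-\partial}x)\Big)d\mu .
\]
Their integrand is exactly
\[
[x_\lambda y]_\mu z-z_{-\mu-\partial}[x_\lambda y]=[[x_\lambda y]_\mu z],
\]
which is the required right-hand side of \eqref{eq:VA2}.

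The main obstacle is bookkeeping rather than any conceptual difficulty. The inner $\partial$ in expressions like $z_{-\mu-\partial}(y_{-\lambda-\partial}x)$ must be interpreted consistently with the way it appears in $[[x_\lambda y]_\mu z]$. Concretely, $-\lambda-\partial$ in $y_{-\lambda-\partial}x$ means $\partial$ acts on the coefficients of that inner expression, which is compatible with it becoming the first argument of the outer operation via sesquilinearity \eqref{eq:conf}. The signs in the integration-by-substitution step for \eqref{eq:VA1} need the same care. I would check both points explicitly on monomials $\lambda^n$ before concluding.
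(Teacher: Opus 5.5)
Your proposal is correct and follows the paper's proof essentially step by step. You reduce to \eqref{eq:VA1} and \eqref{eq:VA2}, get \eqref{eq:VA1} from \eqref{eq:pre-VA1} plus the substitution $\lambda\mapsto-\lambda-\partial$, and get \eqref{eq:VA2} by substituting \eqref{eq:pre-VA2}--\eqref{eq:pre-VA4}, so that the non-integral terms assemble into $[x_\lambda y]\circ z+y\circ[x_\lambda z]$ and the integrands combine into $[[x_\lambda y]_\mu z]$, exactly as the paper does.
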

\begin{proof}
	Since $(A,\cdot\,_{\lambda}\,\cdot,\partial)$ is a pre-Lie conformal algebra, $(A,[\cdot\,_{\lambda}\,\cdot],\partial)$ is a Lie conformal algebra. Since $(A,\triangleright,\triangleleft,\partial)$ is a differential L-dendriform algebra, $(A,\circ,\partial)$ is a differential pre-Lie algebra. In the following, we only need to show that the compatibility identities \eqref{eq:VA1} and \eqref{eq:VA2} hold. First, by \eqref{eq:pre-VA1}, we have
	$$
	x\circ y-y\circ x=x\triangleright y+x\triangleleft y-y\triangleright x-y\triangleleft x=(x\triangleright y-y\triangleleft x)-(y\triangleright x-x\triangleleft y)=\int_{-\partial}^{0} x_{\lambda}y \, d\lambda-\int_{-\partial}^{0} y_{\lambda}x \, d\lambda.
	$$
	Letting $\lambda=-\mu-\partial$, we have  $\int_{-\partial}^{0} y_{\lambda}x \, d\lambda=\int_{-\partial}^{0} y_{-\mu-\partial}x \, d\mu$ and thus 
	$$x\circ y-y\circ x=\int_{-\partial}^{0} x_{\lambda}y \, d\lambda-\int_{-\partial}^{0} y_{-\lambda-\partial}x \, d\lambda=\int_{-\partial}^{0} [x_{\lambda}y] \, d\lambda,$$
	which implies that \eqref{eq:VA1} holds. 
	
	By \eqref{eq:pre-VA2}-\eqref{eq:pre-VA4}, we have
	\begin{eqnarray*}
		&&[x_{\lambda}(y \circ z)] - [x_{\lambda}y] \circ z - y \circ [x_{\lambda}z] \\
		&=&\Big( x_\lambda (y \triangleright z) - (x_\lambda y) \triangleright z +(y_{-\lambda-\partial}x) \triangleright z \Big) + \Big( x_\lambda (y \triangleleft z) -  y \triangleleft (x_\lambda z) +y \triangleleft (z_{-\lambda-\partial }x) \Big) \\
		&& + \Big((y_{-\lambda-\partial}x) \triangleleft z+y \triangleright (z_{-\lambda-\partial}x) -(y \triangleright z)_{-\lambda-\partial} x -(y \triangleleft z)_{-\lambda-\partial} x\Big)-(x_\lambda y)\triangleleft z-y\triangleright (x_\lambda z)\\
		&=&\Big(y\triangleright (x_{\lambda} z)+\int_{0}^{\lambda}(x_{\lambda} y-y_{-\lambda - \partial} x)_\mu z\,d\mu\Big)+\Big((x_{\lambda}y)\triangleleft z-\int_{0}^{\lambda}z_{-\mu - \partial}(x_{\lambda}y)\,d\mu\Big)\\
		&&+\int_{0}^{\lambda}z_{-\mu - \partial}(y_{-\lambda - \partial} x) \,d\mu-y\triangleright (x_{\lambda} z)-(x_{\lambda}y)\triangleleft z\\
		&=&\int_0^\lambda (x_\lambda y-y_{-\lambda-\partial})_\mu z \, d\mu - \int_0^\lambda z_{-\mu-\partial} (x_\lambda y) \, d\mu +\int_0^{\lambda} z_{-\mu-\partial} (y_{-\lambda-\partial} x) \, d\mu\\
		&=&\int_{0}^{\lambda} [[x_{\lambda}y]_{\mu}z] \, d\mu.
	\end{eqnarray*}	
	which implies that \eqref{eq:VA2} holds.
	Thus, $(V, \circ, [\cdot\,_{\lambda}\,\cdot],\partial)$  satisfies all axioms of a vertex algebra. 
\end{proof}

Recall \cite{Loday} that a {\bf Zinbiel algebra} is a vector space $A$ together with a bilinear map  
$*: A \otimes A \to A$ satisfying
\begin{equation}\label{eq:Zinbiel alg.}
	x * (y * z) = (x * y) * z+(y * x) * z, \quad \forall x, y, z \in A.
\end{equation}

Note that a  Zinbiel algebra $(A,\ast)$ is a L-dendriform algebra with the operations $\triangleright,\triangleleft
: A \otimes A \to A$ defined by  
$$x \triangleright y = y \triangleleft x=x\ast y,\quad \forall x,y\in A.$$
Thus L-dendriform algebras can be seen as  non-commutative Zinbiel algebras. 

\begin{defi}
	A \textbf{differential Zinbiel algebra} is a Zinbiel  algebra $(A, *)$ equipped a linear map $\partial : A \to A$ satisfying
	\begin{equation}
		\partial(x * y) = (\partial x) * y + x * (\partial y), \quad \forall x, y \in A.
	\end{equation}
\end{defi} 
A differential Zinbiel algebra $(A,\ast)$ is also a differential L-dendriform algebra and thus  differential L-dendriform algebras can be seen as  non-commutative differential Zinbiel algebras.

Let $(A,\ast,\partial)$ be a differential Zinbiel algebra. Then $(A,\cdot,\partial)$ is a differential algebra, where the operation $\cdot:A\otimes A\to A$ is given by 
\begin{equation}\label{eq:Zinbiel-associative}
	x\cdot y=x* y+y*x,\quad \forall x,y\in A.
\end{equation}

\begin{ex}
	Any differential Zinbiel algebra is a non-unital pre-vertex algebra in which the pre-Lie conformal algebra structure is trivial.
\end{ex}

\begin{defi}
	Let $(A,\circ,\partial)$ be a differential pre-Lie algebra. If  a linear map $T: A \to A$ satisfies $T\circ \partial=\partial\circ T$ and 
	\begin{equation}\label{eq:RB on differential pre-alg}
		T(x) \circ T(y) = T\bigl(T(x) \circ y + x \circ T(y)\bigr), \quad \forall x, y \in A,
	\end{equation}
	then $T$ is called a {\bf  Rota-Baxter operator} on the differential pre-Lie algebra $A$.
\end{defi}

\begin{pro}\label{pre-Lie to L-dendriform}
	Let $T$ be a Rota-Baxter operator on a differential pre-Lie algebra $A$. Then the binary operations given by  
	\begin{equation}\label{eq:RB-triangle}
		x \triangleright y = T(x) \circ y, \quad x \triangleleft y = x \circ T(y), \quad \forall x, y \in A,
	\end{equation}
	define a differential  $L$-dendriform algebra structure on $A$.
\end{pro}
\begin{proof}
	It follows by a direct calculation. 
\end{proof}	
\emptycomment{\begin{proof}
It suffices to verify the identities \eqref{eq:L-den1} and \eqref{eq:L-den2} and the differential property.

First, since $(A,\circ,\partial)$ is a differential pre-Lie algebra, we obtain
$$
\begin{aligned}
	x\triangleright (y\triangleright z)-y\triangleright (x\triangleright z)
	&=T(x)\circ (T(y)\circ z)-T(y)\circ (T(x)\circ z)\\
	&=(T(x)\circ T(y))\circ z-(T(y)\circ T(x))\circ z\\
	&=T(T(x)\circ y+x\circ T(y))\circ z-T(T(y)\circ x+y\circ T(x))\circ z\\
	&=T(x\triangleright y+x\triangleleft y)\circ z-T(y\triangleright x+y\triangleleft x)\circ z\\
	&=(x\triangleright y)\triangleright z+(x\triangleleft y)\triangleright z-(y\triangleright x)\triangleright z-(y\triangleleft x)\triangleright z,
\end{aligned}
$$
which is exactly \eqref{eq:L-den1}.

Next, 
$$
\begin{aligned}
	x\triangleright (y\triangleleft z)
	&=T(x)\circ (y\circ T(z))\\
	&=(T(x)\circ y)\circ T(z)+y\circ (T(x)\circ T(z))-(y\circ T(x))\circ T(z)\\
	&=(x\triangleright y)\triangleleft z+y\circ T(x\triangleright z+x\triangleleft z)-(y\triangleleft x)\triangleleft z\\
	&=(x\triangleright y)\triangleleft z+y\triangleleft (x\triangleright z)+y\triangleleft (x\triangleleft z)-(y\triangleleft x)\triangleleft z,
\end{aligned}
$$
which is exactly \eqref{eq:L-den2}.

Finally, since $T\circ \partial=\partial\circ T$, we have
$$
\begin{aligned}
		\partial(x\triangleleft y)
	&=\partial(x\circ T(y))
	=(\partial x)\circ T(y)+x\circ \partial T(y)\\
	&=(\partial x)\circ T(y)+x\circ T(\partial y)
	=(\partial x)\triangleleft y+x\triangleleft (\partial y)	,
\end{aligned}
$$
and similarly
$$
\begin{aligned}
\partial(x\triangleright y)
&=\partial(T(x)\circ y)
=(\partial T(x))\circ y+T(x)\circ \partial y\\
&=T(\partial x)\circ y+T(x)\circ \partial y
=(\partial x)\triangleright y+x\triangleright (\partial y).
\end{aligned}
$$

Therefore $(A,\triangleleft,\triangleright,\partial)$ is a differential $L$-dendriform algebra.
\end{proof}}

\begin{defi}{\rm (\cite{HB2})}
	Let \((A, [\cdot\,_{\lambda}\, \cdot],\partial)\) be a Lie conformal algebra.
	If $T: A \to A$ is a $\mathbb{C}[\partial]$-module homomorphism satisfying
	\begin{equation}\label{def:baxter-lie-conformal}
		[T(x)_\lambda T(y)] = T\bigl([T(x)_\lambda y] + [x_\lambda T(y)]\bigr), \quad \forall x, y \in A,
	\end{equation}
	then $T$ is called a {\bf Rota-Baxter operator} on the Lie conformal algebra $A$. 
\end{defi}

Let $T$ be a Rota-Baxter operator on the Lie conformal algebra $A$. Then the new $\lambda$-operation $\cdot\,_\lambda\,\cdot:A\otimes A\to A[\lambda]$ defied by 
\begin{equation}\label{eq:RB-pre-Lie}
	x{_\lambda}y\ = [T(x)_\lambda y], \quad \forall x, y \in A,
\end{equation}
makes $A$ become a pre-Lie conformal algebra (\cite{HB2}).

\begin{defi}
	Let $(A, \circ,[\cdot\,_{\lambda}\,\cdot],  \partial)$ be a non-unital vertex algebra and $T: A \to A$ be a $\mathbb{C}[\partial]$-module homomorphism. If $T$ is both a Rota-Baxter operator on the Lie conformal algebra $(A, [\cdot\,_{\lambda}\,\cdot],\partial)$ and the differential pre-Lie algebra $(A, \circ, \partial)$, we call it a \textbf{Rota-Baxter operator} on the  non-unital vertex algebra $A$.
\end{defi}
\begin{pro}\label{pro:RB-embed}
	Let $(A,\circ,[\cdot\,_{\lambda}\,\cdot],  \partial)$ be a non-unital vertex algebra and $T : A \to A$ a Rota-Baxter operator on A. Then $(A,\triangleright,\triangleleft,\cdot\,_{\lambda}\,\cdot,\partial)$ is a pre-vertex algebra, where the $\lambda$-operation $\cdot\,_{\lambda}\,\cdot$ is given by \eqref{eq:RB-pre-Lie} and the operations $\triangleright,\triangleleft$ are given by \eqref{eq:RB-triangle}. 
	
	Conversely, for every pre-vertex algebra $(A,\cdot\,_{\lambda}\,\cdot,\triangleright,\triangleleft,\partial)$, there exists a non-unital vertex algebra $(\mathcal{A} ,\circ_A, [\cdot\,_{\lambda}\,\cdot]_\huaA, \tilde{\partial})$ with a Rota-Baxter operator $T:\mathcal{A}\rightarrow \mathcal{A}$ such that $A$ can be identified as a subalgebra of the pre-vertex algebra obtained from $T$ on $\mathcal{A}$. 
\end{pro}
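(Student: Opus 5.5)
For the first part, I would take the structural axioms from earlier results and then check only the four compatibility identities. By \cite{HB2}, the operation $x_\lambda y=[T(x)_\lambda y]$ makes $(A,\cdot\,_\lambda\,\cdot,\partial)$ a pre-Lie conformal algebra; the $\mathbb{C}[\partial]$-linearity of $T$ gives the sesquilinearity \eqref{eq:conf}. By Proposition \ref{pre-Lie to L-dendriform}, the operations $\triangleright,\triangleleft$ give a differential L-dendriform algebra. It then remains to verify \eqref{eq:pre-VA1}--\eqref{eq:pre-VA4}.

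\textbf{Identity \eqref{eq:pre-VA1}.} Apply \eqref{eq:VA1} with $x$ replaced by $T(x)$. This gives $T(x)\circ y-y\circ T(x)=\int_{-\partial}^0[T(x)_\lambda y]\,d\lambda$, which is exactly $x\triangleright y-y\triangleleft x=\int_{-\partial}^0 x_\lambda y\,d\lambda$.

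\textbf{Identity \eqref{eq:pre-VA3}.} Apply \eqref{eq:VA2} to the triple $(T(x),T(y),z)$ and rewrite $[T(x)_\lambda T(y)]=T([T(x)_\lambda y]+[x_\lambda T(y)])$. By skewsymmetry, $[x_\lambda T(y)]=-[T(y)_{-\lambda-\partial}x]=-y_{-\lambda-\partial}x$. Hence $[T(x)_\lambda T(y)]=T(x_\lambda y-y_{-\lambda-\partial}x)$, and substituting this into the $\triangleright$ and $\mu$-bracket terms yields \eqref{eq:pre-VA3}.

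\textbf{Identity \eqref{eq:pre-VA4}.} Apply \eqref{eq:VA2} to $(T(x),y,T(z))$ in the same way. The integral term $[[T(x)_\lambda y]_\mu T(z)]$ is converted by skewsymmetry into $-z_{-\mu-\partial}(x_\lambda y)$.

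\textbf{Identity \eqref{eq:pre-VA2}.} This involves $(y\circ T(z))$ and $(T(y)\circ z)$ in the first slot of a bracket with $T$. So I would apply $T$ and the Rota--Baxter identity for $\circ$ to get $T(y\triangleright z+y\triangleleft z)=T(y)\circ T(z)$. Then I would write $(T(y)\circ T(z))_{-\lambda-\partial}x=[(T(y)\circ T(z))_{-\lambda-\partial}x]=-[x_\lambda (T(y)\circ T(z))]$ and expand it using \eqref{eq:VA2} with $(x,T(y),T(z))$. Each resulting term $[x_\lambda T(y)]$ is rewritten as $-y_{-\lambda-\partial}x$. The double bracket $[[x_\lambda T(y)]_\mu T(z)]$ becomes, after skewsymmetry, $z_{-\mu-\partial}(y_{-\lambda-\partial}x)$.

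I expect this step to be the main obstacle. Keeping track of the substitutions $\lambda\mapsto-\lambda-\partial$ inside the integrals, and of which $\partial$ acts where, is delicate.

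For the converse, I would mimic the standard embedding of a pre-Lie algebra into a Rota--Baxter Lie algebra. Take $\mathcal{A}=A\oplus A'$, where $A'$ is a second copy of $A$ and $\tilde\partial=\partial\oplus\partial$. On the first copy I put the subadjacent vertex algebra of Proposition \ref{prop:pre-vertex make vertex}. On $A'$ I put the semidirect-product action of that vertex algebra: $[x_\lambda y']=(x_\lambda y)'$, $[y'_\lambda x]=-(x_{-\lambda-\partial}y)'$, $x\circ y'=(x\triangleleft y)'$ wait—more precisely the action via $\triangleright$ from the left and $\triangleleft$ from the right, with $A'\cdot A'=0$. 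The operator is $T(x,y')=(y,0)$.

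Checking that $\mathcal{A}$ is a vertex algebra amounts to checking that $(\triangleright,\triangleleft,\cdot_\lambda\cdot)$ form a representation of the subadjacent vertex algebra on $A$. These are exactly the identities \eqref{eq:pre-VA1}--\eqref{eq:pre-VA4} together with the L-dendriform and pre-Lie conformal axioms. The Rota--Baxter identity for $T$ follows because both sides reduce to brackets in $A$. Finally, $A'$ is closed under the induced pre-vertex operations: $x'\triangleright y'=T(x')\circ y'=(x\triangleright y)'$, and similarly for the others. This identifies $A\cong A'$ as a pre-vertex subalgebra.
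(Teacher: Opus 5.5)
Your proposal is correct and follows essentially the same route as the paper. You obtain \eqref{eq:pre-VA1}--\eqref{eq:pre-VA4} from \eqref{eq:VA1} and \eqref{eq:VA2} applied to $(T(x),y)$, $(x,T(y),T(z))$, $(T(x),T(y),z)$ and $(T(x),y,T(z))$, together with the Rota--Baxter identities and skewsymmetry, and for the converse you use the same $\mathcal{A}=A\oplus A'$ with $T(x+y')=y$. Your observation that the vertex-algebra axioms on $\mathcal{A}$ reduce exactly to the L-dendriform axioms, the pre-Lie conformal axioms and \eqref{eq:pre-VA1}--\eqref{eq:pre-VA4} spells out what the paper calls straightforward; one notational slip is that the left-hand side should read $(y\triangleright z+y\triangleleft z)_{-\lambda-\partial}x=[(T(y)\circ T(z))_{-\lambda-\partial}x]$, since $(T(y)\circ T(z))_{-\lambda-\partial}x$ would carry an extra $T$.
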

\begin{proof}
	By the fact that $T$ is a Rota-Baxter operator on the differential pre-Lie  algebra $(A,\circ,\partial)$, $(A,\triangleright,\triangleleft,\partial)$ is a differential L-dendriform algebra. By the fact that $T$ is a Rota-Baxter operator on the Lie conformal algebra $(A,[\cdot\,_{\lambda}\,\cdot],\partial)$, $(A\cdot\,_{\lambda}\,\cdot,\partial)$ is a pre-Lie conformal algebra. We only need to show that the compatibility conditions \eqref{eq:pre-VA1}-\eqref{eq:pre-VA4} hold. 
	
	By the  fact that $T$ is a Rota-Baxter operator, we have
	\begin{eqnarray*}
		x \triangleright y - y \triangleleft x = T(x) \circ y - y \circ T(x)=\int_{-\partial}^0 [T(x)_\lambda y] \, d\lambda = \int_{-\partial}^0 x_\lambda y \, d\lambda.
	\end{eqnarray*}
	which implies that \eqref{eq:pre-VA1} holds.
	
	By \eqref{eq:VA2} and the  fact that $T$ is a Rota-Baxter operator, we have
	\begin{eqnarray*}
		&&(y \triangleright z + y \triangleleft z)_{-\lambda-\partial} x-(y_{-\lambda-\partial} x) \triangleleft z- y \triangleright (z_{-\lambda-\partial} x)+\int_0^\lambda z_{-\mu-\partial} (y_{-\lambda-\partial} x) \, d\mu\\
		&=&[T\big(T(y)\circ z+y\circ T(z)\big)_{-\lambda-\partial}x]-[T(y)_{-\lambda-\partial}x]\circ T(z)-T(y)\circ[T(z)_{-\lambda-\partial}x]\\
		&&\quad+\int_0^\lambda\big[T(z)_{-\mu-\partial}[T(y)_{-\lambda-\partial}x]\big] \,d\mu\\
		&=&-[x_\lambda (T(y) \circ T(z))]+[x_\lambda T(y)] \circ T(z)+T(y) \circ [x_\lambda T(z)]+\int_0^\lambda [[x_\lambda T(y)]_\mu T(z)] \, d\mu\\
		&=&0.
	\end{eqnarray*}
	which implies that \eqref{eq:pre-VA2} holds.
	
	Similarly, we have
	\begin{eqnarray*}
		&&x_{\lambda} (y\triangleright z)-(x_{\lambda} y-y_{-\lambda - \partial} x)\triangleright z-y\triangleright (x_{\lambda} z)-\int_{0}^{\lambda}(x_{\lambda} y-y_{-\lambda - \partial} x)_\mu z\,d\mu\\
		&=&\big[T(x)_{\lambda}\big(T(y)\circ z\big)\big]-T\big([T(x)_{\lambda}y-T(y)_{-\lambda-\partial}x]\big)\circ z-T(y)\circ[T(x)_{\lambda}z]\\
		&&\quad-\int_{0}^{\lambda}\big[T\big([T(x)_{\lambda}y-T(y)_{-\lambda-\partial}x]\big)_{\mu}z\big]\,d\mu\\
		&=&\big[T(x)_{\lambda}\big(T(y)\circ z\big)\big]-T\big([T(x)_{\lambda}y+x_{\lambda}T(y)\big]\circ z-T(y)\circ[T(x)_{\lambda}z]\\
		&&\quad-\int_{0}^{\lambda}\big[T\big([T(x)_{\lambda}y+x_{\lambda}T(y)]\big)_{\mu}z\big]\,d\mu\\
		&=&\big[T(x)_{\lambda}\big(T(y)\circ z\big)\big]-[T(x)_{\lambda}T(y)]\circ z-T(y)\circ[T(x)_{\lambda}z]-\int_{0}^{\lambda}\big[[T(x)_{\lambda}T(y)]_{\mu}z\big]\,d\mu\\
		&=&0.
	\end{eqnarray*}
	which implies that \eqref{eq:pre-VA3} holds.
	
	Furthermore, we also have
	\begin{eqnarray*}
		&&x_{\lambda}(y\triangleleft z)-y\triangleleft (x_{\lambda} z-z_{-\lambda - \partial} x)-(x_{\lambda}y)\triangleleft z+\int_{0}^{\lambda}z_{-\mu - \partial}(x_{\lambda}y)\,d\mu\\
		&=&\big[T(x)_{\lambda}\big(y\circ T(z)\big)\big]-y\circ T\big([T(x)_{\lambda}z-T(z)_{-\lambda-\partial}x]\big)-[T(x)_{\lambda}y]\circ T(z)\\
		&&\quad+\int_{0}^{\lambda}\big[T(z)_{-\mu-\partial}[T(x)_{\lambda}y]\big]\,d\mu\\
		&=&\big[T(x)_{\lambda}\big(y\circ T(z)\big)\big]-y\circ T\big([T(x)_{\lambda}z+x_{\lambda}T(z)]\big)-[T(x)_{\lambda}y]\circ T(z)\\
		&&\quad -\int_{0}^{\lambda}\big[[T(x)_{\lambda}y]_{\mu}T(z)\big]\,d\mu\\
		&=&\big[T(x)_{\lambda}\big(y\circ T(z)\big)\big]-y\circ [T(x)_{\lambda}T(z)]-[T(x)_{\lambda}y]\circ T(z)-\int_{0}^{\lambda}\big[[T(x)_{\lambda}y]_{\mu}T(z)\big]\,d\mu\\
		&=&0.
	\end{eqnarray*}
	which implies that \eqref{eq:pre-VA4} holds.

Conversely,	let $A'$ be a copy of $A$, and set $\mathcal{A} := A \oplus A'$.  For $x \in A$, denote by $x'$ the corresponding element in $A'$.  Define a linear map
	$$\tilde{\partial} : \mathcal{A}\to \mathcal{A},\quad \tilde{\partial}(x + y') := \partial x + (\partial y)', \quad \forall~x, y \in A,$$
	and define a bilinear operation $\circ_A$ and a $\lambda$-bracket $[\cdot\,_\lambda \,\cdot]_\mathcal{A}$ on $A$ by  
	$$x \circ_\mathcal{A} y := x \circ y, \quad x \circ_\mathcal{A} y' := (x \triangleright y)', \quad x' \circ_\mathcal{A} y := (x \triangleleft y)', \quad x' \circ_\mathcal{A} y' := 0,$$  
	and  
	$$[x_\lambda y]_\mathcal{A} := [x_\lambda y], \quad [x_\lambda y']_\mathcal{A} := (x_\lambda y)', \quad [x'_\lambda y]_\mathcal{A} := -(y _{-\lambda - \partial} x)', \quad [x'_\lambda y']_\mathcal{A} := 0.$$
	It is straightforward to check  that $(\mathcal{A}, \circ_\mathcal{A}, [\cdot\,_\lambda\, \cdot]_\mathcal{A}, \tilde{\partial})$ is a non-unital vertex algebra.
	
	Now define a linear map $T : \mathcal{A} \to \mathcal{A}$ by  
	$$T|_{A} = 0, \quad T(x') = x, \quad \forall~x \in A.$$
	It is clear that $T \tilde{\partial} = \tilde{\partial} T$ and  $T$ is  a Rota–Baxter operator on the differential pre-Lie algebra $(\mathcal{A}, \circ_\mathcal{A}, \tilde{\partial})$. In fact, $T$ is  also a Rota–Baxter operator on the Lie conformal algebra $(\mathcal{A}, [\cdot\,_\lambda\, \cdot]_\mathcal{A}, \tilde{\partial})$. Because  $T|_{A} = 0$, the condition \eqref{def:baxter-lie-conformal} follows immediately for all cases where $x,y\in A$, or $x\in A,y'\in A'$, or $x'\in A',y\in A$.  For $x', y' \in A'$,  we have  
	$$[T(x')_{\lambda} T(y')]_{\mathcal{A}} = [x_{\lambda} y],$$
	while  
	$$[T(x')_{\lambda} y']_{\mathcal{A}} = (x_{\lambda} y)', \quad [x'_{\lambda} T(y')]_{\mathcal{A}} = -(y_{-\lambda - \partial} x)'.$$
	Hence, we have
	$$T\big([T(x')_{\lambda} y']_{\mathcal{A}} + [y'_{\lambda} T(x')]_{\mathcal{A}}\big) = T\big((x_{\lambda} y)' - (y_{-\lambda- \partial}  x)'\big) = [x_{\lambda} y] = [T(x')_{\lambda} T(y')]_{\mathcal{A}},$$
	which implies that $T$ is a Rota–Baxter operator on the Lie conformal algebra $\mathcal{A}$. Consequently,  $T$ is  a Rota–Baxter operator on the non-unital vertex algebra $(\mathcal{A}, \circ_\mathcal{A}, [\cdot\,_\lambda\, \cdot]_\mathcal{A}, \tilde{\partial})$. Moreover, the pre-vertex algebra induced by the Rota–Baxter operator $T$ on the non-unital vertex algebra $\mathcal{A}$ is given by
\begin{align*}
		x \triangleright y&=0,&x \triangleright y'&=0,&x' \triangleright y'&=(x\triangleright  y)',& x' \triangleright y&=x\circ y,\\
		x \triangleleft y&=0,&x \triangleleft y'&=x\circ y,&x' \triangleleft  y'&=(x\triangleleft  y)',& x' \triangleleft y&=0,\\
		x_\lambda y&=0,&x_\lambda y'&=0,&x'_\lambda y&=[x_\lambda y],&x'_\lambda y'&=(x_\lambda y)'.
\end{align*}
Define an embedding $j:A\rightarrow \mathcal{A}$ by $j(x)=x'$ for all $x\in A$. It is clear that 
\begin{align*}
	j(x  \triangleright y)&=(x  \triangleright y)'=x' \triangleright y'=j(x)\triangleright  j(y),\\
		j(x  \triangleleft y)&=(x  \triangleleft y)'=x' \triangleleft y'=j(x)\triangleleft  j(y),\\
			j(x _\lambda y)&=(x_\lambda y)'=x'_\lambda y'=j(x)_\lambda j(y).
	\end{align*}
Thus $A$ can be identified as a subalgebra of the pre-vertex algebra on $\mathcal{A}$ induced by the Rota–Baxter operator $T$. 
 \end{proof}

\subsection{Equivalence between pre-vertex algebras and dendriform vertex algebras}
Inspired by the axioms of dendriform algebras and weak-associativity of vertex algebras, Bai, Guo and Liu introduce the notion of a dendriform vertex algebra. 

\begin{defi}{\rm (\cite{BGL2})}
A {\bf dendriform vertex algebra} $(V,\prec_t,\succ_t,D)$ is a vector space $V$ equipped with a linear map $D:V\to V$ and two linear operators
$$
\prec_t,\ \succ_t:V\to \operatorname{Hom}\Bigl(V,V\bigl((t)\bigr)\Bigr),
$$
satisfying
\begin{eqnarray}
	(t_0+t_2)^{N_{x,z}}(x\prec_{t_0} y)\prec_{t_2}z&=&(t_0+t_2)^{N_{x,z}}x\prec_{t_0+t_2} (y\succ_{t_2}z+y\prec_{t_2} z),\\
		(t_0+t_2)^{N_{x,z}}(x\succ_{t_0} y)\prec_{t_2}z&=&(t_0+t_2)^{N_{x,z}}x\succ_{t_0+t_2} (y\prec_{t_2} z),\\
		(t_0+t_2)^{N_{x,z}}(x\succ_{t_0}y+x\prec_{t_0} y)\succ_{t_2}z&=&(t_0+t_2)^{N_{x,z}}x\succ_{t_0+t_2} (y\succ_{t_2} z),\\
	(t_1-t_2)^{N_{x,y}}x\succ_{t_1} (y\prec_{t_2}z)&=&(t_1-t_2)^{N_{x,y}}y\prec_{t_2} (x\succ_{t_1}z+x\prec_{t_1} z),\\
	(t_1-t_2)^{N_{x,y}}x\succ_{t_1} (y\succ_{t_2}z)&=&(t_1-t_2)^{N_{x,y}}y\succ_{t_2} (x\succ_{t_1}z),
	\end{eqnarray}
where $x,y,z\in V$ and $N_{x,y},N_{x,z}\in \Nat$ depend on $x,y$ and $x,z$, respectively, and moreover, they need to satisfy
\begin{eqnarray}\label{field skew-sym}
&&	e^{tD}(x\prec_{-t}y)=y\succ_t x,\qquad e^{tD}(x\succ_{-t}y)=y\prec_t x,\\
\label{D-compatibility conditions}
&&	D(x\prec_t y)-x\prec_t(Dy)=\frac{d}{dt}(x\prec_t y),\qquad
	D(x\succ_t y)-x\succ_t(Dy)=\frac{d}{dt}(x\succ_t y).
\end{eqnarray}
\end{defi}

By the skew-symmetry identities \eqref{field skew-sym}, the $ D $-bracket derivative identities \eqref{D-compatibility conditions} are equivalent to the $ D $-derivative identities
\begin{equation}\label{D-compatibility conditions2}
	(Dx) \prec_t y = \frac{d}{dt}(x \prec_t y), \quad (Dx) \succ_t y = \frac{d}{dt}(x \succ_t y).
\end{equation}

An analog of the Jacobi identity of vertex algebras, they give a equivalent description of dendriform vertex algebras.
\begin{defi}{\rm (\cite{BGL2})}\label{def:jac-dendriform}
	A {\bf dendriform vertex algebra} $(V,\prec_t,\succ_t,D)$ is a vector space $V$ equipped with a linear map $D:V\to V$ and two linear operators
	$$
	\prec_t,\ \succ_t:V\to \operatorname{Hom}\Bigl(V,V\bigl((t)\bigr)\Bigr),
	$$ 
	satisfying  \eqref{field skew-sym}, \eqref{D-compatibility conditions} and the Jacobi identity
	\begin{eqnarray}\label{Jacobi-1}
		&&t_0^{-1}\delta\!\left(\frac{t_1-t_2}{t_0}\right)x\succ_{t_1}(y\succ_{t_2}z)
		-t_0^{-1}\delta\!\left(\frac{-t_2+t_1}{t_0}\right)y\succ_{t_2}(x\succ_{t_1}z)\nonumber\\
		&=&t_2^{-1}\delta\!\left(\frac{t_1-t_0}{t_2}\right)(x\succ_{t_0}y+x\prec_{t_0}y)\succ_{t_2}z.
	\end{eqnarray}
	\end{defi}

Note that the Jacobi identity \eqref{Jacobi-1} is equivalent to one of the following identities
\begin{eqnarray}
	\label{Jacobi-2}&&t_0^{-1}\delta\!\left(\frac{t_1-t_2}{t_0}\right)x\succ_{t_1}(y\prec_{t_2}z)
	-t_0^{-1}\delta\!\left(\frac{-t_2+t_1}{t_0}\right)y\prec_{t_2}(x\succ_{t_1}z+x\prec_{t_1}z)\nonumber\\
	&=&t_2^{-1}\delta\!\left(\frac{t_1-t_0}{t_2}\right)(x\succ_{t_0}y)\prec_{t_2}z,\\
	\label{Jacobi-3}&&t_0^{-1}\delta\!\left(\frac{t_1-t_2}{t_0}\right)x\prec_{t_1}(y\prec_{t_2}z+y\succ_{t_2}z)
	-t_0^{-1}\delta\!\left(\frac{-t_2+t_1}{t_0}\right)y\succ_{t_2}(x\prec_{t_1}z)\nonumber\\
	&=&t_2^{-1}\delta\!\left(\frac{t_1-t_0}{t_2}\right)(x\prec_{t_0}y)\prec_{t_2}z.
\end{eqnarray}

Recall \cite{BGL2}  that a linear map \( T : V \to V \) on a non-unital  vertex algebra \((V, Y, D)\) is called a {\bf Rota–Baxter operator} if  $T \circ D = D \circ T $ and 
\begin{equation}\label{eq:RB-VA2}
Y\big(T(x), t\big)T(y) = T\Bigl(Y\big(T(x), t\big)y + Y(x, t)T(y)\Bigr), \quad \forall x,y\in V.
\end{equation}

\begin{pro}\label{pro:RB-dendriform}{\rm (\cite{BGL2})}
	Let $ (V, Y, D) $ be a non-unital  vertex algebra and $ T $ a Rota--Baxter operator on $ V $. Define
	$$
	x \succ_t y := Y\big(T(x), t\big)y, \quad x \prec_t y := Y(x, t)T(y).
	$$
	Then $ (V, \prec_t, \succ_t, D) $ is a dendriform vertex algebra. 
\end{pro}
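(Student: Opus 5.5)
We will verify, for $x\succ_t y:=Y(T(x),t)y$ and $x\prec_t y:=Y(x,t)T(y)$, the three groups of axioms in Definition~\ref{def:jac-dendriform}: the skew-symmetry identities \eqref{field skew-sym}, the $D$-compatibility identities \eqref{D-compatibility conditions}, and the Jacobi identity \eqref{Jacobi-1}. Truncation is automatic: for fixed $x,y$, both $Y(T(x),t)y$ and $Y(x,t)T(y)$ lie in $V((t))$ by the truncation property of $V$. So $\prec_t,\succ_t$ are well-defined maps $V\to \operatorname{Hom}(V,V((t)))$.

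For skew-symmetry, we apply the skew-symmetry of $Y$ to the pair $(x,T(y))$:
$$e^{tD}(x\prec_{-t}y)=e^{tD}Y(x,-t)T(y)=Y(T(y),t)x=y\succ_t x.$$
Similarly, applying it to $(T(x),y)$ gives $e^{tD}(x\succ_{-t}y)=e^{tD}Y(T(x),-t)y=Y(y,t)T(x)=y\prec_t x$. For the $D$-compatibility it is easiest to check the equivalent form \eqref{D-compatibility conditions2}. For $\prec$, the $D$-derivative property gives $(Dx)\prec_t y=Y(Dx,t)T(y)=\frac{d}{dt}Y(x,t)T(y)$. For $\succ$ we use $TD=DT$:
$$(Dx)\succ_t y=Y(T(Dx),t)y=Y(D\,T(x),t)y=\frac{d}{dt}Y(T(x),t)y.$$
The equivalence of \eqref{D-compatibility conditions2} and \eqref{D-compatibility conditions} follows from the skew-symmetry just established, as the paper notes. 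Alternatively, one can verify \eqref{D-compatibility conditions} directly from the standard consequence $[D,Y(u,t)]=\frac{d}{dt}Y(u,t)$ of the vertex algebra axioms, again using $DT=TD$ for the $\prec$ case.

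The main step is the Jacobi identity \eqref{Jacobi-1}. We apply the Jacobi identity of $V$ to the pair $(T(x),T(y))$ acting on $z$:
$$t_0^{-1}\delta\!\left(\tfrac{t_1-t_2}{t_0}\right)Y(T(x),t_1)Y(T(y),t_2)z-t_0^{-1}\delta\!\left(\tfrac{-t_2+t_1}{t_0}\right)Y(T(y),t_2)Y(T(x),t_1)z$$
$$=t_2^{-1}\delta\!\left(\tfrac{t_1-t_0}{t_2}\right)Y\big(Y(T(x),t_0)T(y),t_2\big)z.$$
The left-hand side is exactly $x\succ_{t_1}(y\succ_{t_2}z)-y\succ_{t_2}(x\succ_{t_1}z)$ with the delta-function coefficients. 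On the right-hand side, the Rota--Baxter identity \eqref{eq:RB-VA2}, applied coefficientwise in $t_0$, gives
$$Y(T(x),t_0)T(y)=T\big(Y(T(x),t_0)y+Y(x,t_0)T(y)\big)=T\big(x\succ_{t_0}y+x\prec_{t_0}y\big).$$
Hence the right-hand side equals $t_2^{-1}\delta\big(\tfrac{t_1-t_0}{t_2}\big)(x\succ_{t_0}y+x\prec_{t_0}y)\succ_{t_2}z$, which is \eqref{Jacobi-1}.

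The only delicate point is justifying that $T$ may be pulled inside the generating series, and that the substitution commutes with the $t_0$-expansion and the delta functions. This holds because $T$ is linear and acts on each coefficient $x_{(n)}$-term separately, while the truncation property ensures that all products of formal series that appear are well defined. Since Definition~\ref{def:jac-dendriform} is stated to be equivalent to the original definition via \cite{BGL2}, these checks prove that $(V,\prec_t,\succ_t,D)$ is a dendriform vertex algebra.
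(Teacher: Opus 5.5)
Your proof is correct. The paper does not prove this proposition; it quotes it from \cite{BGL2} and uses it as a black box, notably in the converse half of the equivalence theorem between dendriform vertex algebras and pre-vertex algebras. So there is no competing route in the text, and your argument is the natural direct verification.

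The three checks are all sound:
\begin{itemize}
\item Skew-symmetry is the skew-symmetry of $Y$ applied to the pairs $(x,T(y))$ and $(T(x),y)$.
\item The $D$-identities follow from the $D$-derivative property together with $TD=DT$. Their equivalence with \eqref{D-compatibility conditions} via skew-symmetry is indeed valid.
\item \eqref{Jacobi-1} is exactly the Jacobi identity of $V$ for the triple $(T(x),T(y),z)$. To see this, apply the Rota--Baxter identity coefficientwise in $t_0$ to the iterate $Y(T(x),t_0)T(y)$.
\end{itemize}

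One small improvement would make your argument independent of the paper's unproved remark that \eqref{Jacobi-1} alone is equivalent to \eqref{Jacobi-2} and \eqref{Jacobi-3}: check those two identities directly, in the same way. Here the Rota--Baxter identity is used on the inner product rather than on the iterate. It gives $y\prec_{t_2}(x\succ_{t_1}z+x\prec_{t_1}z)=Y(y,t_2)Y(T(x),t_1)T(z)$ and $x\prec_{t_1}(y\prec_{t_2}z+y\succ_{t_2}z)=Y(x,t_1)Y(T(y),t_2)T(z)$. Hence \eqref{Jacobi-2} and \eqref{Jacobi-3} are the Jacobi identity of $V$ for the triples $(T(x),y,T(z))$ and $(x,T(y),T(z))$, respectively.

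The same bookkeeping also verifies the five weak associativity and weak commutativity identities of the first definition directly. Apply weak associativity or commutativity of $Y$ to the appropriate triple among $(T(x),T(y),z)$, $(T(x),y,T(z))$ and $(x,T(y),T(z))$. The required exponents depend only on $x,z$ (respectively $x,y$), as that definition demands.
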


\begin{pro}\label{pro:dendriform2}{\rm (\cite{BGL2})}
Let $(V,\prec_t,\succ_t,D)$ be a dendriform vertex algebra and $ (V, Y, D) $ be the corresponding non-unital  vertex algebra. Let $W=V$, and define 
$$Y_W:V\rightarrow ({\rm End}W)[[t,t^{-1}]],\quad Y_W(x,t)y:=x\prec_t y,\quad \forall~x\in V,y\in W.$$
Then $(W,Y_W)$ is a module over $ (V, Y, D) $.
\end{pro}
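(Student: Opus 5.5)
Here $(V,Y,D)$ is the non-unital vertex algebra associated to the dendriform vertex algebra, namely $Y(x,t)y = x\succ_t y + x\prec_t y$. We must show that $Y_W(x,t)=x\prec_t\cdot$ defines a $V$-module structure on $W=V$. Concretely, we check three things: the truncation property, the $D$-derivative property $Y_W(Dx,t)=\frac{d}{dt}Y_W(x,t)$, and the module Jacobi identity
$$t_0^{-1}\delta\!\left(\tfrac{t_1-t_2}{t_0}\right)Y_W(x,t_1)Y_W(y,t_2)z-t_0^{-1}\delta\!\left(\tfrac{-t_2+t_1}{t_0}\right)Y_W(y,t_2)Y_W(x,t_1)z=t_2^{-1}\delta\!\left(\tfrac{t_1-t_0}{t_2}\right)Y_W(Y(x,t_0)y,t_2)z.$$

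The first two are immediate. Truncation holds because $\prec_t$ takes values in $V((t))$. The $D$-derivative property is exactly the first identity in \eqref{D-compatibility conditions2}, which was derived from \eqref{field skew-sym} and \eqref{D-compatibility conditions}.

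For the Jacobi identity, substitute the definitions. The left side becomes
$$t_0^{-1}\delta(\cdot)\,x\prec_{t_1}(y\prec_{t_2}z)-t_0^{-1}\delta(\cdot)\,y\prec_{t_2}(x\prec_{t_1}z),$$
and the right side becomes $t_2^{-1}\delta(\cdot)(x\succ_{t_0}y+x\prec_{t_0}y)\prec_{t_2}z$. The plan is to add two of the known Jacobi-type identities:
\begin{itemize}
\item \eqref{Jacobi-3} has left side $\delta\, x\prec_{t_1}(y\prec_{t_2}z+y\succ_{t_2}z)-\delta\, y\succ_{t_2}(x\prec_{t_1}z)$ and right side $\delta\,(x\prec_{t_0}y)\prec_{t_2}z$.
\item \eqref{Jacobi-2} (with $x,y$ playing the roles given there) has left side $\delta\, x\succ_{t_1}(y\prec_{t_2}z)-\delta\, y\prec_{t_2}(x\succ_{t_1}z+x\prec_{t_1}z)$ and right side $\delta\,(x\succ_{t_0}y)\prec_{t_2}z$.
\end{itemize}
Summing them gives the desired right side, but the left side carries extra terms: $x\prec_{t_1}(y\succ_{t_2}z)+x\succ_{t_1}(y\prec_{t_2}z)$ in the first slot and $y\succ_{t_2}(x\prec_{t_1}z)+y\prec_{t_2}(x\succ_{t_1}z)$ in the second. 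These do not cancel directly, so the naive sum is the wrong combination.

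A cleaner route, which I would take first, is to subtract a known identity from the full vertex algebra Jacobi identity for $Y$, which holds because $(V,Y,D)$ is a vertex algebra:
\begin{itemize}
\item Expand $Y(x,t_1)Y(y,t_2)z$ into the four terms $\succ\succ,\ \succ\prec,\ \prec\succ,\ \prec\prec$.
\item Subtract \eqref{Jacobi-1}, which accounts for the $\succ\succ$ parts and the term $(x\succ y+x\prec y)\succ z$.
\item Subtract \eqref{Jacobi-2}, which accounts for $x\succ(y\prec z)$ and $(x\succ y)\prec z$.
\end{itemize}
What remains should be exactly the module Jacobi identity for $\prec$. The bookkeeping of which mixed terms survive is the main obstacle.

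If the mixed terms do not close up this way, the fallback is to prove the module axioms in the equivalent associativity/commutativity form. Take the fourth defining relation, $(t_1-t_2)^N x\succ_{t_1}(y\prec_{t_2}z)=(t_1-t_2)^N y\prec_{t_2}(x\succ_{t_1}z+x\prec_{t_1}z)$, together with its analogue obtained via skew-symmetry \eqref{field skew-sym}; this yields weak commutativity for $Y_W$. The first two defining relations, added together, give weak associativity:
$$(t_0+t_2)^N(x\succ_{t_0}y+x\prec_{t_0}y)\prec_{t_2}z=(t_0+t_2)^N x\prec_{t_0+t_2}(y\prec_{t_2}z).$$
Here the $x\succ_{t_0+t_2}(y\prec_{t_2}z)$ term must be absorbed by the $\succ\prec$ component of the first relation, which is again a delicate check. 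Weak associativity together with the $D$-derivative property is known to be equivalent to the module Jacobi identity (Li's theorem), and that equivalence would finish the proof.
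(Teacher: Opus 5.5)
There is a genuine gap, and better bookkeeping cannot close it: with $Y_W(x,t)y:=x\prec_t y$ the statement is false. Each of your routes therefore breaks exactly at the step you flag as delicate.

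The full Jacobi identity for $Y=\succ_t+\prec_t$ is precisely the sum of \eqref{Jacobi-1}, \eqref{Jacobi-2} and \eqref{Jacobi-3}. Subtracting the first two therefore just returns \eqref{Jacobi-3}. Turning that into the module identity for $\prec_t$ would additionally require
\[
t_0^{-1}\delta\bigl(\tfrac{t_1-t_2}{t_0}\bigr)\bigl(x\prec_{t_1}(y\succ_{t_2}z)+x\succ_{t_1}(y\prec_{t_2}z)\bigr)=t_0^{-1}\delta\bigl(\tfrac{-t_2+t_1}{t_0}\bigr)\bigl(y\succ_{t_2}(x\prec_{t_1}z)+y\prec_{t_2}(x\succ_{t_1}z)\bigr),
\]
which does not follow from the axioms. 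Likewise, adding the first two defining relations gives $(x\succ y+x\prec y)\prec z=x\prec(y\prec z)+x\prec(y\succ z)+x\succ(y\prec z)$ (with the appropriate variables), and the last two terms do not vanish.

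Here is a concrete counterexample. Let $V=\mathbb{C}[s]$ with $D=0$ and $Y(x,t)y=xy$, and let $T(f)=\int_0^s f$, which is a Rota--Baxter operator. By Proposition \ref{pro:RB-dendriform}, $x\succ_t y=T(x)y$ and $x\prec_t y=xT(y)$, so the associated vertex algebra is $Y(x,t)y=T(x)y+xT(y)$. All fields are independent of $t$, so taking $\operatorname{Res}_{t_1}$ of the module Jacobi identity would force $Y_W(x,t_1)Y_W(y,t_2)z=Y_W(Y(x,t_0)y,t_2)z$. For $x=y=z=1$, however, the left side is $T(T(1))=s^2/2$, while the right side is $Y_W(2s,t_2)1=2sT(1)=2s^2$.

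The statement contains a misprint: the module structure should be $Y_W(x,t)y:=x\succ_t y$. This is the vertex analogue of the fact that $L_\succ$ makes a dendriform algebra a left module over its associated associative algebra. It is also the action the paper actually uses in Corollary \ref{cor:vertex-embed}, where $Y_A(x,t)y'=(x\succ_t y)'$; the paper itself gives no proof and cites \cite{BGL2}.

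For this corrected version your three-step framework works at once:
\begin{itemize}
\item truncation holds because $\succ_t$ takes values in $V((t))$;
\item the $D$-derivative property is the second identity in \eqref{D-compatibility conditions2};
\item the module Jacobi identity for $Y_W$ relative to $Y=\succ_t+\prec_t$ is literally \eqref{Jacobi-1}.
\end{itemize}
As a check, in the example above $Y_W(x,t)z=T(x)z$ does satisfy $T(x)T(y)z=T\bigl(T(x)y+xT(y)\bigr)z$.
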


\begin{cor}\label{cor:vertex-embed}
For every dendriform vertex algebra $(V,\prec_t,\succ_t,D)$, there exists a non-unital  vertex algebra $(A,Y_A,\tilde{D}) $ with a Rota-Baxter operator $T:A\rightarrow A$ such that $V$ can be identified as a subalgebra  of the dendriform algebra constructed from $T$. 
\end{cor}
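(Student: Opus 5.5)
The plan is to mimic the converse part of Proposition \ref{pro:RB-embed} in the state-field language. Let $(V,Y,D)$ be the non-unital vertex algebra associated with $(V,\prec_t,\succ_t,D)$, with $Y(x,t)y:=x\succ_t y+x\prec_t y$; that this is a non-unital vertex algebra is the content behind Proposition \ref{pro:dendriform2}. Let $V'$ be a copy of $V$ and put $A:=V\oplus V'$, with $\tilde D(x+y')=Dx+(Dy)'$. Define
$$Y_A(x,t)y:=Y(x,t)y,\quad Y_A(x,t)y':=(x\succ_t y)',\quad Y_A(x',t)y:=(x\prec_t y)',\quad Y_A(x',t)y':=0,$$
and $T:A\to A$ by $T|_V=0$ and $T(x')=x$.

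The first step is to check that $(A,Y_A,\tilde D)$ is a non-unital vertex algebra. Conceptually, $A$ is the semidirect product of $V$ with a module structure on $V'$, namely $Y_{V'}(x,t)y':=(x\succ_t y)'$, and $V'$ is an abelian ideal. Truncation is inherited from $\prec_t,\succ_t$ taking values in $V((t))$. The $D$-derivative property follows from \eqref{D-compatibility conditions2}. Skew-symmetry in the mixed case reads $Y_A(x,t)y'=e^{t\tilde D}Y_A(y',-t)x$, that is, $x\succ_t y=e^{tD}(y\prec_{-t}x)$, which is \eqref{field skew-sym}.

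For the Jacobi identity, the case $x,y,z\in V$ is that of $V$, and any case with two primed entries vanishes on both sides. The three remaining cases are as follows. The case $(x,y,z')$ is \eqref{Jacobi-1} read as a module Jacobi identity for $\succ$. The cases $(x,y',z)$ and $(x',y,z)$ reduce to \eqref{Jacobi-2} and \eqref{Jacobi-3}, respectively. In the first of these two, the middle term involves $Y_A(y',t_2)Y(x,t_1)z=(y\prec_{t_2}(x\succ_{t_1}z+x\prec_{t_1}z))'$, matching \eqref{Jacobi-2}.

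I expect this case bookkeeping to be the main obstacle. One must check that each mixed Jacobi identity is exactly one of \eqref{Jacobi-1}--\eqref{Jacobi-3} and not a variant that needs skew-symmetry to reach. If a mismatch occurs, I would first apply \eqref{field skew-sym} together with the standard equivalence of Jacobi with skew-symmetry plus weak associativity, as in the cited equivalence of the two definitions in \cite{BGL2}.

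The second step is to verify that $T$ is a Rota--Baxter operator, that is, $T\tilde D=\tilde DT$ and \eqref{eq:RB-VA2}. Commutation with $\tilde D$ is clear. Since $T|_V=0$ and $Y_A$ vanishes on $V'\otimes V'$, both sides of \eqref{eq:RB-VA2} vanish unless both arguments are primed. For $x',y'$ we have $Y_A(T x',t)Ty'=Y(x,t)y$, while
$$T\big(Y_A(x,t)y'+Y_A(x',t)y\big)=T\big((x\succ_t y)'+(x\prec_t y)'\big)=x\succ_t y+x\prec_t y,$$
so the two sides agree.

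Finally, the dendriform structure induced by $T$ via Proposition \ref{pro:RB-dendriform} gives
$$x'\succ_t y'=Y_A(x,t)y'=(x\succ_t y)',\qquad x'\prec_t y'=Y_A(x',t)y=(x\prec_t y)'.$$
Hence $j(x)=x'$ embeds $V$ as a dendriform vertex subalgebra, and it commutes with $D$.
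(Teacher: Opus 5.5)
Your proposal is correct and follows the paper's proof: it uses the same split extension $A=V\oplus V'$, the same $Y_A$, the same $T$ (with $T|_V=0$, $T(x')=x$) and the same embedding $j(x)=x'$. The paper cites Proposition \ref{pro:dendriform2} (a module structure, hence a semidirect product) for the vertex algebra axioms and calls the Rota--Baxter identity clear, whereas you check the mixed Jacobi cases directly; they do match \eqref{Jacobi-1}, \eqref{Jacobi-2}, \eqref{Jacobi-3} exactly, with the action on $V'$ given by $\succ_t$ as in $Y_A$. One small correction: in your Rota--Baxter check, the cases with an unprimed argument vanish because $V$ is closed under $Y_A$ and $T|_V=0$, not because $Y_A$ vanishes on $V'\otimes V'$.
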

\begin{proof}
	let $V'$ be a copy of $V$, and set $A := V \oplus V'$.  For $v \in V$, denote by $v'$ the corresponding element in $V'$.  Define a linear map
	$$\tilde{D} : A\to A,\quad \tilde{D}(x + y') := D x + (D y)', \quad \forall~x, y \in V,$$
	and define an operation $Y : A \to ({\rm End~ A})[[t, t^{-1}]]$ by  
	$$Y_A(x,t)y=x \succ_t y+x \prec_t y,\quad Y_A(x',t)y=(x \prec_t y)',\quad Y_A(x,t)y'=(x \succ_t y)',\quad Y_A(x',t)y'=0.$$
By Proposition \ref{pro:dendriform2}, $(A, Y_A,\tilde{\partial})$ is a non-unital vertex algebra.
	
	Now define a linear map $T : A \to A$ by  
	$$T|_{V} = 0, \quad T(v') = v, \quad \forall~v \in V.$$
	It is clear that $T$ is  a Rota–Baxter operator on the non-unital vertex algebra $(A, Y_A,\tilde{\partial})$ and the dendriform vertex algebra on $A$ induced by the Rota–Baxter operator $ T$ is given by
	\begin{align*}
	x\succ_t y &= 0,&x'\succ_t y&= Y_A\big(x, t\big)y,&x\succ_t y'&=0,& x'\succ_t y'&=(x\succ_t y)',\\
	x\prec_t y &= 0,&x'\prec_t y&= 0,&x\prec_t y'&=Y_A\big(x, t\big)y,& x'\prec_t y'&=(x\prec_t y)'.
		\end{align*}
	
	Moreover, Define an embedding $j:V\rightarrow A$ by $j(x)=x'$ for all $x\in A$. Note that 
	$$j(x)\succ_t j(y)=(x\succ_t y)'=j(x\succ_t y),\quad j(x)\prec_t j(y)=(x\prec_t y)'=j(x\prec_t y).$$
	Thus $V$ can be embedded into the dendriform algebra constructed from $T$.
\end{proof}

\begin{pro}\label{pro:twoRB-equ}
$T:V\to V$ is a Rota-Baxter operator on the non-unital vertex algebra $ (V, Y, D)$ in the sense of Definition \ref{def:VA2}  if and only if $T:V\to V$ is a Rota-Baxter operator on the non-unital vertex algebra $(V,\circ,[\cdot\,_\lambda\,\cdot],\partial=D)$ in the sense of Definition \ref{def:VA}.
\end{pro}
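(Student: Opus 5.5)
Both notions of Rota–Baxter operator include the condition $T\circ D=D\circ T$. Since $\partial=D$, commuting with $\partial$ is exactly being a $\mathbb{C}[\partial]$-module homomorphism, so this part of the two definitions matches. The work is therefore to compare the identity \eqref{eq:RB-VA2} with the pair of identities \eqref{def:baxter-lie-conformal} and \eqref{eq:RB on differential pre-alg}. The plan is to expand \eqref{eq:RB-VA2} mode by mode. Writing $Y(x,t)=\sum_n x_{(n)}t^{-n-1}$, the identity \eqref{eq:RB-VA2} is equivalent to
$$T(x)_{(n)}T(y)=T\bigl(T(x)_{(n)}y+x_{(n)}T(y)\bigr),\qquad \forall n\in\mathbb{Z},\ x,y\in V.$$
We then read these identities through the dictionary $[x_\lambda y]=\sum_{n\ge 0}\frac{\lambda^n}{n!}x_{(n)}y$, $x\circ y=x_{(-1)}y$, and $x_{(-n-1)}y=\frac{1}{n!}(\partial^n x)\circ y$.

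\emph{($\Rightarrow$)} Multiply the $n\ge 0$ identities by $\lambda^n/n!$ and sum; by linearity of $T$ this gives \eqref{def:baxter-lie-conformal}. The $n=-1$ identity is literally \eqref{eq:RB on differential pre-alg}. So $T$ is a Rota–Baxter operator on both the Lie conformal algebra and the differential pre-Lie algebra, hence on the vertex algebra in the sense of Definition \ref{def:VA}.

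\emph{($\Leftarrow$)} Comparing coefficients of $\lambda^n$ in \eqref{def:baxter-lie-conformal} recovers the mode identities for all $n\ge 0$, and \eqref{eq:RB on differential pre-alg} gives $n=-1$. It remains to obtain the identities for $n=-k-1$ with $k\ge 1$. Here we use $x_{(-k-1)}y=\frac1{k!}(\partial^k x)\circ y$ and $T\partial=\partial T$. The required identity becomes
$$(\partial^kT(x))\circ T(y)=T\bigl((\partial^kT(x))\circ y+(\partial^k x)\circ T(y)\bigr).$$
Since $\partial^k T(x)=T(\partial^k x)$, this is exactly \eqref{eq:RB on differential pre-alg} applied to the pair $(\partial^k x,y)$. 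Hence all modes satisfy the identity, which is \eqref{eq:RB-VA2}.

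The only point needing care is this negative-mode step. One must check that the dictionary formula $x_{(-n-1)}y=\frac1{n!}(\partial^n x)\circ y$ is applied to the first argument $T(x)$ on the left-hand side and to $T(x)$, respectively $x$, in the two terms on the right-hand side. Commutation of $T$ with $\partial$ is exactly what makes these terms match. No other obstacle is expected; the rest is coefficient comparison.
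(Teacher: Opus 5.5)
Your proposal is correct and follows the paper's proof: you expand \eqref{eq:RB-VA2} into the mode identities for all $n\in\mathbb{Z}$, identify the $n\ge 0$ modes with the conformal Rota--Baxter identity and $n=-1$ with the pre-Lie one, and recover the remaining negative modes from $x_{(-k-1)}y=\frac{1}{k!}(\partial^k x)\circ y$ together with $T\partial=\partial T$. You also write out the negative-mode step explicitly, namely applying the pre-Lie identity to the pair $(\partial^k x,y)$, where the paper only asserts it.
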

\begin{proof}
	Let $T:V\to V$ be a Rota-Baxter operator on the dendriform vertex algebra $ (V, Y, D)$.  Since $T\circ D=D\circ T$, we have $T\circ \partial=\partial\circ T$. Note that the condition \eqref{eq:RB-VA2} is equivalent to 
	\begin{equation}\label{eq:RBcomp}
		(T(x))_{(n)}(T(y))=T\big(T(x)_{(n)}y+x_{(n)} (T(y))\big),\quad n\in \mathbb{Z}.
	\end{equation}
	Note the Lie conformal algebra $\lambda$-bracket on $V$ is given by 
	$$[x_\lambda y]=\sum_{n\in \mathbb{Z}_+}\frac{\lambda^n}{n!}x_{(n)}y.$$
	For $n\geq 0$, \eqref{eq:RBcomp} is equivalent to
$$	[T(x)_\lambda T(y)] = T\bigl([T(x)_\lambda y] + [x_\lambda T(y)]\bigr).$$
For $n=-1$, by the definition of the pre-Lie operation  $x\circ y=x_{(-1)}y$, \eqref{eq:RBcomp} is equivalent to
$$T(x) \circ T(y) = T\bigl(T(x) \circ y + x \circ T(y)\bigr).$$
Thus $T:V\to V$ is a Rota-Baxter operator on the non-unital vertex algebra $(V,\circ,[\cdot\,_\lambda\,\cdot],\partial=\partial)$.

Conversely, note that $x_{(-n-1)}y=\frac{1}{n!}(\partial^n x )\circ y$ for $n\in \mathbb{Z}_+$. Since $T\circ \partial=\partial\circ T$ and $T$ is a Rota-Baxter operator on the pre-Lie algebra $(V,\circ)$, \eqref{eq:RBcomp} holds for any $n\leq -1$.  \eqref{eq:RBcomp} follows  for any $n\geq 0$ by the fact that $T$ is a Rota-Baxter operator on the Lie conformal algebra $(V,[\cdot\,_\lambda\,\cdot],\partial)$. Thus $T:V\to V$ is a Rota-Baxter operator on the non-unital vertex algebra $ (V, Y, D)$.
\end{proof}

Given that the dendriform vertex algebra and the non-unital pre-vertex algebra are both derived from a non-unital vertex algebra through the Rota-Baxter operator, we expect them to be equivalent. In what follows, we prove that the given dendriform vertex algebra is equivalent to a non-unital pre-vertex algebra.

Let $(V,\prec_t,\succ_t,D)$ be a dendriform vertex algebra. For $x,y\in V$, we write
$$
x\prec_t y=\sum_{n\in\mathbb Z}(x\prec_{(n)}y)t^{-n-1},\qquad
x\succ_t y=\sum_{n\in\mathbb Z}(x\succ_{(n)}y)t^{-n-1},
$$
where $\succ_{(n)},\prec_{(n)}:V\otimes V\rightarrow V$ are linear maps for all $n\in \mathbb{Z}$. 

We define
\begin{equation}\label{eq:lambda-1}
x\triangleleft y:=x\prec_{(-1)}y,\qquad
x\triangleright y:=x\succ_{(-1)}y,\qquad
x_\lambda y:=\operatorname{Res}_t e^{\lambda t}(x\succ_t y)=\sum_{n\in \mathbb{Z}_+}\frac{\lambda^n}{n!}x\succ_{(n)}y,
\end{equation}
and set $\partial:=D$. 

By Condition $e^{tD}(x\succ_{-t}y)=y\prec_t x$ in  \eqref{field skew-sym} for $x,y\in V$, we have
\begin{equation}\label{eq:lambda-2}
\operatorname{Res}_t e^{\lambda t}(x\prec_t y)=-\,y_{-\lambda-\partial}x.
\end{equation}

By \eqref{D-compatibility conditions2}, for $n\geq 0$, we have $$x\prec_{(-n-1)}y=\frac{1}{n!}(\partial^n x)\prec_{(-1)}y=\frac{1}{n!}(\partial^n x )\triangleleft y,$$ and $$x\succ_{(-n-1)}y=\frac{1}{n!}(\partial^n x)\succ_{(-1)}y=\frac{1}{n!}(\partial^n x) \triangleright y.$$

\emptycomment{Therefore,
$$
\operatorname{Res}_z e^{\lambda z}\bigl(x\succ_z y+x\prec_z y\bigr)
=
x_\lambda y-y_{-\lambda-\partial}x.
$$
We also define the total field
$$
Y(x,z)y:=x\prec_z y+x\succ_z y=\sum_{n\in\mathbb Z}(x_{(n)}y)z^{-n-1}.
$$
Hence
$$
x_{(n)}y=x\prec_{(n)}y+x\succ_{(n)}y,\qquad n\in\mathbb Z.
$$
In other words, the singular part of the total field $Y(x,z)y:=x\succ_z y+x\prec_z y$ gives precisely the subadjacent Lie conformal bracket associated to the pre-Lie conformal product $x_\lambda y$.
}

The following equivalence can be proved using the fact that both a dendriform vertex algebra and a non-unital pre-vertex algebra can be embedded into the two equivalent definitions of a non-unital vertex algebra with a Rota-Baxter operator. To make the correspondence clearer, we provide a direct proof on one side.
\begin{thm}
	 $(V,\prec_t,\succ_t,D)$ is a dendriform vertex algebra if and only if  $(V,\triangleright,\triangleleft,\cdot\,_\lambda\,\cdot,\partial=D)$ is a non-unital pre-vertex algebra.
\end{thm}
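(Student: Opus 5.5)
We will prove the direction ``dendriform vertex algebra $\Rightarrow$ pre-vertex algebra'' directly, and obtain the converse through the embedding results.

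\emph{Forward direction: structural axioms.} Let $(V,\prec_t,\succ_t,D)$ be a dendriform vertex algebra and define $\triangleright,\triangleleft,\cdot\,_\lambda\,\cdot$ by \eqref{eq:lambda-1}.

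First, the sesquilinearity \eqref{eq:conf} of $x_\lambda y$ follows from \eqref{D-compatibility conditions2} and \eqref{D-compatibility conditions}. Indeed, $(Dx)\succ_{(n)}y=-n\,x\succ_{(n-1)}y$, which gives $(\partial x)_\lambda y=-\lambda x_\lambda y$. The relation $D(x\succ_t y)-x\succ_t Dy=\frac{d}{dt}(x\succ_t y)$ gives $x_\lambda \partial y=(\partial+\lambda)x_\lambda y$ after taking $\operatorname{Res}_t e^{\lambda t}$.

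The derivation property of $\partial$ for $\triangleright,\triangleleft$ is the $t^0$-coefficient of \eqref{D-compatibility conditions}, since $\frac{d}{dt}$ kills that coefficient.

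Next we extract component identities from the Jacobi identities \eqref{Jacobi-1}, \eqref{Jacobi-2} and \eqref{Jacobi-3}. We apply $\operatorname{Res}_{t_0}$ and $\operatorname{Res}_{t_1}$, weighted by $t_0^m t_1^n$, for suitable $m,n$. This is the same standard mechanism by which the Jacobi identity of Definition~\ref{def:VA2} yields Borcherds' identities and, in \cite{Bakalov}, the axioms of Definition~\ref{def:VA}.

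\emph{Deriving the individual axioms.}
\begin{itemize}
\item Taking $m,n\ge 0$ in \eqref{Jacobi-1} and packaging with $e^{\lambda t_1}e^{\mu t_2}$ gives the commutator formula
\[
x_\lambda(y_\mu z)-y_\mu(x_\lambda z)=\operatorname{Res}_{t}e^{(\lambda+\mu)t}\big((x\succ_{\cdot}y+x\prec_{\cdot}y)\text{-}\lambda\text{ part}\big)_\mu z .
\]
By \eqref{eq:lambda-2}, this becomes $(x_\lambda y-y_{-\lambda-\partial}x)_{\lambda+\mu}z$, which is exactly \eqref{eq:lsc}.
\item The $(-1)$-products of the three Jacobi identities (residue with $t_0^{-1}$, $n=0$, as in Bakalov--Kac's derivation of quasi-associativity) give the L-dendriform identities \eqref{eq:L-den1} and \eqref{eq:L-den2}. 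Here the integral correction terms cancel in pairs once one uses \eqref{eq:pre-VA1}.
\item \eqref{eq:pre-VA1} follows from skew-symmetry \eqref{field skew-sym}: the $t^0$-coefficient of $y\prec_t x=e^{tD}(x\succ_{-t}y)$ gives
\[
y\triangleleft x=\sum_{j\ge0}\frac{(-\partial)^{j+1}}{(j+1)!}\,x\succ_{(j)}y+x\triangleright y,
\]
and the sum equals $-\int_{-\partial}^0 x_\lambda y\,d\lambda$.
\item The mixed identities \eqref{eq:pre-VA3} and \eqref{eq:pre-VA4} come from taking $m=0$, $n\ge0$ residues in \eqref{Jacobi-1} and \eqref{Jacobi-2} respectively, with the $(-1)$-product in the $t_2$ variable, in the style of the derivation of \eqref{eq:VA2} from Borcherds' identity.
\item \eqref{eq:pre-VA2} is obtained from \eqref{Jacobi-3} in the same manner, after rewriting $\prec$-brackets via \eqref{eq:lambda-2}.
\end{itemize}

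\emph{The obstacle and the fallback.} The main obstacle is the bookkeeping in these last steps: producing the $\int_0^\lambda$ terms with the correct signs and the substitution $-\lambda-\partial$. To avoid redoing this, we first try a cleaner route that uses the earlier results. By Corollary~\ref{cor:vertex-embed}, $V$ embeds via $j(x)=x'$ into the dendriform vertex algebra induced by a Rota--Baxter operator $T$ on a vertex algebra $(A,Y_A,\tilde D)$. By Proposition~\ref{pro:twoRB-equ}, $T$ is also a Rota--Baxter operator on the Bakalov--Kac vertex algebra $(A,\circ,[\cdot\,_\lambda\,\cdot],\tilde D)$. By Proposition~\ref{pro:RB-embed}, this yields a pre-vertex algebra on $A$ with operations $T(x)\circ y$, $x\circ T(y)$ and $[T(x)_\lambda y]$.

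We then check that these coincide with the operations \eqref{eq:lambda-1} built from $x\succ_t y=Y_A(T x,t)y$ and $x\prec_t y=Y_A(x,t)Ty$. This is immediate from the dictionary $x\circ y=x_{(-1)}y$ and $[x_\lambda y]=\sum_n \frac{\lambda^n}{n!}x_{(n)}y$. Since $j$ intertwines the operations, the pre-vertex identities restrict to $j(V)\cong V$. The only point needing care is that the restriction is closed, which holds since $j(V)$ is closed under $\succ_t,\prec_t$.

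\emph{Converse direction.} Given a pre-vertex algebra $(V,\triangleright,\triangleleft,\cdot\,_\lambda\,\cdot,\partial)$, we define
\[
x\succ_t y=\sum_{n\ge0}x\succ_{(n)}y\,t^{-n-1}+\sum_{n\ge0}\tfrac{1}{n!}(\partial^n x)\triangleright y\;t^n ,
\]
where the $x\succ_{(n)}y$ are the coefficients of $x_\lambda y$. We define $x\prec_t y$ by the analogous series, with singular part read off from $-y_{-\lambda-\partial}x$ via \eqref{eq:lambda-2}.

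To verify that this is a dendriform vertex algebra, we use the converse part of Proposition~\ref{pro:RB-embed}: $V$ embeds in the pre-vertex algebra of a Rota--Baxter operator $T$ on a Bakalov--Kac vertex algebra $\mathcal A$. By Proposition~\ref{pro:twoRB-equ} and the equivalence of Definitions~\ref{def:VA} and \ref{def:VA2}, $T$ is a Rota--Baxter operator on $(\mathcal A,Y,D)$. Proposition~\ref{pro:RB-dendriform} then gives a dendriform vertex algebra on $\mathcal A$, whose restriction to $j(V)$ is precisely the structure defined above, by the same dictionary. Finally, the two constructions are mutually inverse, because each series is determined by its nonnegative modes together with its $(-1)$-mode, using \eqref{D-compatibility conditions2}.
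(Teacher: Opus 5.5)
Your proof is correct. The converse is the paper's own argument: embed via Proposition \ref{pro:RB-embed}, pass to the state-field picture by Proposition \ref{pro:twoRB-equ}, apply Proposition \ref{pro:RB-dendriform}, and restrict to $V'\cong V$. You are in fact more explicit than the paper here, since you write out the restricted series and note, via \eqref{D-compatibility conditions2} and \eqref{eq:lambda-2}, that the two constructions are mutually inverse.

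The forward direction is where you genuinely differ. You embed $V$ through Corollary \ref{cor:vertex-embed}, transfer $T$ by Proposition \ref{pro:twoRB-equ}, and invoke the first half of Proposition \ref{pro:RB-embed}. After that you only need three things: the dictionary check $x\succ_{(-1)}y=T(x)\circ y$, $x\prec_{(-1)}y=x\circ T(y)$, $\sum_{n\ge0}\frac{\lambda^n}{n!}x\succ_{(n)}y=[T(x)_\lambda y]$; that $j(V)$ is closed under $\succ_t,\prec_t$; and that $\tilde D j=jD$.

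The paper mentions this embedding route but deliberately proves the forward direction by hand:
\begin{itemize}
\item it extracts the mode identities \eqref{eq:Jacobi-1-mode}, \eqref{eq:Jacobi-2-mode}, \eqref{eq:Jacobi-3-mode};
\item it takes $m=n=-1$, together with the constant terms of skew-symmetry, to get the L-dendriform identities;
\item it forms generating functions over $m,n\ge0$ to get \eqref{eq:lsc};
\item it sets $n=-1$ and sums against $\lambda^m/m!$ to get \eqref{eq:pre-VA3}, \eqref{eq:pre-VA4} and \eqref{eq:pre-VA2} from \eqref{Jacobi-1}, \eqref{Jacobi-2} and \eqref{Jacobi-3} respectively.
\end{itemize}

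Your version is shorter and makes the two directions symmetric. However, it leans on Corollary \ref{cor:vertex-embed} (the split extension $V\oplus V'$ built from the $\succ_t$-action) and on the non-unital Bakalov--Kac dictionary. The paper's forward computation uses nothing beyond the dendriform axioms, and it shows exactly which Jacobi identity produces which pre-vertex axiom.

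The mode choices in your preliminary sketch are slightly off. The L-dendriform identities come from the commutator formula at $m=n=-1$, and \eqref{eq:pre-VA3}, \eqref{eq:pre-VA4} come from $n=-1$ with $m\ge0$ summed. This is harmless, since your final argument does not rely on that sketch.
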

\begin{proof}
Let $(V,\prec_t,\succ_t,D)$ be a dendriform vertex algebra.	Comparing the constant terms of both sides of $e^{tD}(x\prec_{-t}y)=y\succ_t x$ and $e^{tD}(x\succ_{-t}y)=y\prec_t x$ in  \eqref{field skew-sym}, we have 
\begin{equation}\label{sym-mode-constant-1}
x\triangleleft y+\sum_{n\ge 0}\frac{(-1)^{n+1}}{(n+1)!}\partial^{n+1}(x\prec_{(n)}y)-	y\triangleright x=0 
\end{equation}
and 
\begin{equation}\label{sym-mode-constant-2}
	x\triangleright y+\sum_{n\ge 0}\frac{(-1)^{n+1}}{(n+1)!}\partial^{n+1}(x\succ_{(n)}y)-	y\triangleleft x=0 ,
\end{equation}

By  \eqref{Jacobi-1}, we  have
\begin{equation}\label{eq:Jacobi-1-mode}
x\succ_{(m)}(y\succ_{(n)}z)-y\succ_{(n)}(x\succ_{(m)}z)=\sum_{j\ge0}\binom{m}{j}(x\succ_{(j)}y+x\prec_{(j)}y)\succ_{(m+n-j)}z,
\end{equation}
Letting $m=n=-1$, by \eqref{sym-mode-constant-1} and  \eqref{sym-mode-constant-2}, we get
\begin{align*}
x\triangleright(y\triangleright z)-y\triangleright(x\triangleright z)&=\sum_{j\ge0}\binom{-1}{j}(x\succ_{(j)}y+x\prec_{(j)}y)\succ_{(-2-j)}z\\
&=\sum_{j\ge0}(-1)^{j}(x\succ_{(j)}y+x\prec_{(j)}y)\succ_{-(j+1)-1}z\\
&=\big(\sum_{j\ge0}\frac{(-1)^{j}}{(j+1)!}\partial^{(j+1)}(x\succ_{(j)}y)+\sum_{j\ge0}\frac{(-1)^{j}}{(j+1)!}\partial^{(j+1)}(x\prec_{(j)}y)\big)\succ_{(-1)} z\\
&=(x\triangleright y)\triangleright z-(y\triangleleft x)\triangleright z+(x\triangleleft y)\triangleright z-(y\triangleright x)\triangleright z,
\end{align*}
which implies that \eqref{eq:L-den1} holds.

By  \eqref{Jacobi-2}, we have
\begin{equation}\label{eq:Jacobi-2-mode}
x\succ_{(m)}(y\prec_{(n)}z)-y\prec_{(n)}(x\succ_{(m)}z+x\prec_{(m)}z)=\sum_{j\ge0}\binom{m}{j}(x\succ_{(j)}y)\prec_{(m+n-j)}z,
\end{equation}

Letting $m=n=-1$, by  \eqref{sym-mode-constant-2}, we get
\begin{align*}
	x\triangleright(y\triangleleft z)-y\triangleleft(x\triangleright z+x\triangleleft z)&=\sum_{j\ge0}\binom{-1}{j}(x\succ_{(j)}y)\prec_{(-2-j)}z=\sum_{j\ge0}(-1)^{j}(x\succ_{(j)}y)\prec_{-(j+1)-1}z\\
	&=\big(\sum_{j\ge0}\frac{(-1)^{j}}{(j+1)!}\partial^{(j+1)}x\succ_{(j)}y\big)\prec_{(-1)} z=(x\triangleright y-y\triangleleft x)\triangleleft z,
\end{align*}
which implies that \eqref{eq:L-den2} holds.

Furthermore, comparing the constant terms of both sides of $D(x\prec_t y)-x\prec_t(Dy)=(Dx) \prec_t y$ and $D(x\succ_t y)-x\succ_t(Dy)=(Dx) \succ_t y $ in \eqref{D-compatibility conditions}, we have 
		$$
		\partial (x \triangleleft y) = (\partial x) \triangleleft y + x \triangleleft (\partial y)
	,\qquad 
	\partial (x \triangleright y) = (\partial x) \triangleright y + x \triangleright (\partial y).
	$$
Thus  $(V,\triangleleft,\triangleright,\partial)$ is a differential L-dendriform algebra.

By $(Dx) \succ_t y = \frac{d}{dt}(x \succ_t y)$ in \eqref{D-compatibility conditions2}, we have
\[
(\partial x)_\lambda y
=\operatorname{Res}_t e^{\lambda t}\bigl((\partial x)\succ_t y\bigr)
=\operatorname{Res}_t e^{\lambda t}\frac{d}{dt}(x\succ_t y)
=-\lambda\operatorname{Res}_t e^{\lambda t}\bigl(x\succ_t y\bigr)=-\lambda x_\lambda y.
\]
Moreover, by $D(x\succ_t y)-x\succ_t(Dy)=\frac{d}{dz}(x\succ_t y)$ in \eqref{D-compatibility conditions}, we have
$$x_\lambda(\partial y)
=\operatorname{Res}_t e^{\lambda z}\bigl(x\succ_t(\partial y)\bigr)
=\partial(x_\lambda y)+\lambda x_\lambda y
=(\partial+\lambda)x_\lambda y.
$$
Thus the $\lambda$-operation $\cdot\,_\lambda\,\cdot$ satisfies \eqref{eq:conf}.

By  \eqref{eq:Jacobi-1-mode}, multiplying both sides by $\lambda^m/m! \ ,\mu^n/n!$ and summing over $m,n\ge0$, the left-hand side becomes
\[
x_\lambda(y_\mu z)-y_\mu(x_\lambda z).
\]
For the right-hand side, using
\[
\binom{m}{j}\frac{\lambda^m}{m!}
=
\frac{\lambda^j}{j!}\frac{\lambda^{m-j}}{(m-j)!},
\]
we obtain
$$
\sum_{j,k\ge0}\frac{\lambda^j}{j!}\frac{(\lambda+\mu)^k}{k!}
\bigl(x\succ_{(j)}y+x\prec_{(j)}y\bigr)\succ_{(k)}z=(x_\lambda y-y_{-\lambda-\partial}x)_{\lambda+\mu}z.
$$
Hence,  the $\lambda$-operation $\cdot\,_\lambda\,\cdot$ satisfies \eqref{eq:lsc}, making  $(V,\cdot\,_{\lambda}\,\cdot,\partial)$  a pre-Lie conformal algebra.

In the following, we  left to check that \eqref{eq:pre-VA1}-\eqref{eq:pre-VA4} hold.
By  $x_\lambda y=\sum_{n\geq0}\frac{\lambda^n}{n!}x\succ_{(n)}y$, we have
	$$
	\int_{-\partial}^{0}x_\lambda y\,d\lambda
	=
	\sum_{n\ge 0}\frac{(-1)^n}{(n+1)!}\partial^{n+1}(x\succ_{(n)}y).
	$$
and by 
\eqref{sym-mode-constant-2} $$x\triangleright y+\sum_{n\ge 0}\frac{(-1)^{n+1}}{(n+1)!}\partial^{n+1}(x\succ_{(n)}y)-	y\triangleleft x=0,$$
 we have 
	$$
	x\triangleright y-y\triangleleft x=\int_{-\partial}^{0}x_\lambda y\,d\lambda, 
	$$
	which implies that the compatibility Condition \eqref{eq:pre-VA1} holds.

	By  \eqref{Jacobi-3}, we have
	\begin{equation}\label{eq:Jacobi-3-mode}
	x\prec_{(m)}(y\prec_{(n)}z+y\succ_{(n)}z)-y\succ_{(n)}(x\prec_{(m)}z)
=
\sum_{j\ge0}\binom{m}{j}(x\prec_{(j)}y)\prec_{(m+n-j)}z,
	\end{equation}
	Letting $n=-1$, we get
\begin{equation}\label{eq:dend-pre vertex}
	x\prec_{(m)}(y\triangleleft z+y\triangleright z)-y\triangleright(x\prec_{(m)}z)
	=
	\sum_{j\ge0}\binom{m}{j}(x\prec_{(j)}y)\prec_{(m-1-j)}z.
\end{equation}

Multiplying both sides by $\lambda^m/m!$ and summing over $m\ge0$, by \eqref{eq:lambda-2}, the  left-hand side becomes
$$
-(y\triangleleft z+y\triangleright z)_{-\lambda-\partial}x
+
y\triangleright(z_{-\lambda-\partial}x),
$$
while the right-hand side becomes 
$$
\sum_{n\ge0}\frac{\lambda^n}{n!}
\bigl(-\,y_{-\lambda-\partial}x\bigr)\prec_{(n-1)}z=-\,(y_{-\lambda-\partial}x)\triangleleft z
+
\sum_{n\ge0}\frac{\lambda^{n+1}}{(n+1)!}
\bigl((-\,y_{-\lambda-\partial}x)\prec_{(n)}z\bigr).
$$
On the other hand, by 
	$
	\sum_{n\ge0}\frac{\mu^n}{n!}\bigl((-\,y_{-\lambda-\partial}x)\prec_{(n)}z\bigr)=
	\,z_{-\mu-\partial}(\,y_{-\lambda-\partial}x),
	$
we have
	$$
	\sum_{n\ge0}\frac{\lambda^{n+1}}{(n+1)!}
	\bigl((-\,y_{-\lambda-\partial}x)\prec_{(n)}z\bigr)
	=
	\int_0^\lambda z_{-\mu-\partial}(y_{-\lambda-\partial}x)\,d\mu.
	$$
Furthermore, comparing both sides, we obtain
	$$
	(y\triangleright z+y\triangleleft z)_{-\lambda-\partial}x
	=
	(y_{-\lambda-\partial}x)\triangleleft z
	+
	y\triangleright(z_{-\lambda-\partial}x)
	-
	\int_0^\lambda z_{-\mu-\partial}(y_{-\lambda-\partial}x)\,d\mu,
	$$
	which implies that the compatibility Condition \eqref{eq:pre-VA2} holds.
	
Setting $n=-1$ in \eqref{eq:Jacobi-1-mode}, we get
$$x\succ_{(m)}(y\triangleright z)-y\triangleright(x\succ_{(m)}z)=\sum_{j\ge0}\binom{m}{j}(x\succ_{(j)}y+x\prec_{(j)}y)\succ_{(m-1-j)}z,$$
Multiplying both sides by $\lambda^m/m!$ and summing over $m\ge0$, the left-hand side becomes
$$x_\lambda(y\triangleright z)-y\triangleright(x_\lambda z),$$
while the right-hand side becomes 
$$\sum_{n\ge0}\frac{\lambda^n}{n!}(x_\lambda y-y_{-\lambda-\partial}x)\succ_{(n-1)}z=(x_\lambda y-y_{-\lambda-\partial}x)\triangleright z+\sum_{n\ge0}\frac{\lambda^{n+1}}{(n+1)!}(x_\lambda y-y_{-\lambda-\partial}x)\succ_{(n)}z,$$
On the other hand, by $\sum_{n\ge0}\frac{\mu^n}{n!}(x_\lambda y-y_{-\lambda-\partial}x)\succ_{(n)}z=((x_\lambda y-y_{-\lambda-\partial}x)_\mu z)$, we have $$\sum_{n\ge0}\frac{\lambda^{n+1}}{(n+1)!}(x_\lambda y-y_{-\lambda-\partial}x)\succ_{(n)}z=\int_0^\lambda (x_\lambda y-y_{-\lambda-\partial}x)_\mu z\,d\mu,$$
Comparing both sides, we obtain
$$x_\lambda(y\triangleright z)=(x_\lambda y-y_{-\lambda-\partial}x)\triangleright z+y\triangleright(x_\lambda z)+\int_0^\lambda (x_\lambda y-y_{-\lambda-\partial}x)_\mu z\,d\mu,$$
which implies that the compatibility Condition \eqref{eq:pre-VA3} holds.

Letting $n=-1$ in \eqref{eq:Jacobi-2-mode}, we get
$$x\succ_{(m)}(y\triangleleft z)-y\triangleleft(x\succ_{(m)}z+x\prec_{(m)}z)=\sum_{j\ge0}\binom{m}{j}(x\succ_{(j)}y)\prec_{(m-1-j)}z.$$
Multiplying both sides by $\lambda^m/m!$ and summing over $m\ge0$, the left-hand side becomes
$$x_\lambda(y\triangleleft z)-y\triangleleft(x_\lambda z-z_{-\lambda-\partial}x),$$
while the right-hand side becomes
$$\sum_{n\ge0}\frac{\lambda^n}{n!}(x_\lambda y)\prec_{(r-1)}z=(x_\lambda y)\triangleleft z+\sum_{n\ge0}\frac{\lambda^{n+1}}{(n+1)!}(x_\lambda y)\prec_{(n)}z,$$
On the other hand, by $\sum_{n\ge0}\frac{\mu^n}{n!}(x_\lambda y)\prec_{(n)}z=\operatorname{Res}_t e^{\mu t}((x_\lambda y)\prec_t z)=-\,z_{-\mu-\partial}(x_\lambda y),$
 we have
$$\sum_{n\ge0}\frac{\lambda^{n+1}}{(n+1)!}(x_\lambda y)\prec_{(n)}z=-\int_0^\lambda z_{-\mu-\partial}(x_\lambda y)\,d\mu.$$
Comparing both sides, we obtain
$$x_\lambda(y\triangleleft z)=y\triangleleft(x_\lambda z-z_{-\lambda-\partial}x)+(x_\lambda y)\triangleleft z-\int_0^\lambda z_{-\mu-\partial}(x_\lambda y)\,d\mu, $$
which implies that the compatibility Condition \eqref{eq:pre-VA4} holds. Thus $(V,\triangleright,\triangleleft,\cdot\,_\lambda\,\cdot,\partial=D)$ is a non-unital pre-vertex algebra.

Conversely, let $(V, \triangleright, \triangleleft, \cdot\,_\lambda\,\cdot, \partial)$ be a non-unital pre-vertex algebra. By Proposition \ref{pro:RB-embed}, $V$ can be embedded into a non-unital vertex algebra $(\huaA = V \oplus V',\circ_\huaA, [\cdot\,_\lambda \,\cdot]_\huaA,  \tilde{\partial})$ equipped with a Rota–Baxter operator $T$. Moreover, by Proposition \ref{pro:twoRB-equ}, $(\huaA, Y_\huaA, \tilde{D} = \tilde{\partial})$ is a non-unital vertex algebra in the sense of Definition \ref{def:VA2}, retaining the same Rota–Baxter operator $T$. Then, by Proposition \ref{pro:RB-dendriform}, the operator $T$ induces a dendriform vertex algebra $(\huaA, \prec_t, \succ_t, \tilde{D})$ on $\huaA$. Restricting this structure to $V' \simeq V$ yields the desired dendriform vertex algebra on $V$. 
\end{proof}

\section{Pre-Poisson vertex algebras and Poisson vertex algebras}\label{sec:pre-Poisson}
In this section, we first recall the notion of a Poisson vertex algebra. We then introduce pre-Poisson vertex algebras and show that every pre-Poisson vertex algebra naturally gives rise to a Poisson vertex algebra. 

Recall that a {\bf differential algebra} is a commutative associative algebra $A$ equipped with a linear map $\partial:A\to A$ satisfying $\partial(x\cdot y)=(\partial x) \cdot y+x\cdot (\partial y)$ for $x$ and $y$ in $A$.

\begin{defi}
A {\bf Poisson vertex algebra} is a differential algebra $ (P,\cdot,\partial)$
endowed with an Lie conformal algebra $\lambda$-bracket $[\cdot\,_\lambda\,\cdot]:\,P\otimes P\to P[\lambda]$
satisfying the Leibniz rule ($x,y,z\in P$):
\begin{equation}\label{eq:leib}
[x_\lambda (y\cdot z)]=y\cdot[x_\lambda z]+z\cdot [x_\lambda y]\,.
\end{equation}
\end{defi}

Analogous to the notion of pre-Poisson algebras, we will give the notion of pre-Poisson vertex algebras. 
\begin{defi}\label{defi:pre-PVA}
A {\bf pre-Poisson vertex algebra} is a quadruple $(A,\ast,\cdot\,_{\lambda}\,\cdot,\partial)$, where $(A,\ast,\partial)$ is a differential Zinbiel algebra and $(A,\cdot_{\lambda}\cdot,\partial)$ is a
	pre-Lie conformal algebra satisfying: for $x,y,z\in A$
\begin{eqnarray}
\label{eq:pre-PVA1}(x_\lambda y - y_{-\lambda-\partial} x) * z& =& x_\lambda (y * z) - y * (x_\lambda z),\\
\label{eq:pre-PVA2}(x * y + y * x)_{-\lambda-\partial} z &= &y * (x_{-\lambda-\partial} z) + x * (y_{-\lambda-\partial} z).
\end{eqnarray}
\end{defi}

Any pre-Poisson vertex algebra can give rise to a Poisson vertex algebra.
\begin{pro}
 Let $(A,\ast,\cdot\,_{\lambda}\,\cdot,\partial)$ be a pre-Poisson vertex algebra. Then $(A,\cdot,[\cdot\,_\lambda \,\cdot],\partial)$ is a Poisson vertex algebra, where the operation $\cdot:A\otimes A\to A$ is given by \eqref{eq:Zinbiel-associative} and the $\lambda$-bracket $[\cdot\,_\lambda\,\cdot]:A\otimes A\to A[\lambda]$ is given by \eqref{eq:PLCA-LCA}.
\end{pro}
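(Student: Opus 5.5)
We need three things: $(A,\cdot,\partial)$ is a differential (commutative associative) algebra; $(A,[\cdot\,_\lambda\,\cdot],\partial)$ is a Lie conformal algebra; and the Leibniz rule \eqref{eq:leib} holds. The first two are standard and I would simply cite them.

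First, $x\cdot y=x*y+y*x$ is commutative by construction. Associativity is the classical fact that the symmetrized Zinbiel product is associative. Concretely, I would expand $(x\cdot y)\cdot z$ and $x\cdot(y\cdot z)$ and use \eqref{eq:Zinbiel alg.} to rewrite the terms $(x*y)*z+(y*x)*z=x*(y*z)$ and their analogues; both sides then reduce to the same sum of right-nested terms. Compatibility with $\partial$ follows immediately from the derivation property of $\partial$ on $*$.

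Second, the fact that the subadjacent bracket \eqref{eq:PLCA-LCA} of a pre-Lie conformal algebra is a Lie conformal algebra was already recalled before Proposition \ref{prop:pre-vertex make vertex}.

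The main step is the Leibniz rule. Expand
\[
[x_\lambda(y\cdot z)]=x_\lambda(y*z)+x_\lambda(z*y)-(y*z+z*y)_{-\lambda-\partial}x .
\]
\begin{itemize}
\item Apply \eqref{eq:pre-PVA1} to $x_\lambda(y*z)$ and to $x_\lambda(z*y)$. This gives
\[
[x_\lambda y]*z+y*(x_\lambda z)+[x_\lambda z]*y+z*(x_\lambda y).
\]
\item Apply \eqref{eq:pre-PVA2}, with $(x,y,z)$ replaced by $(y,z,x)$, to the last term. This gives $-z*(y_{-\lambda-\partial}x)-y*(z_{-\lambda-\partial}x)$.
\item Regroup using \eqref{eq:PLCA-LCA}: $y*(x_\lambda z)-y*(z_{-\lambda-\partial}x)=y*[x_\lambda z]$, and similarly $z*(x_\lambda y)-z*(y_{-\lambda-\partial}x)=z*[x_\lambda y]$.
\end{itemize}
Altogether,
\[
[x_\lambda(y\cdot z)]=[x_\lambda y]*z+z*[x_\lambda y]+[x_\lambda z]*y+y*[x_\lambda z]=z\cdot[x_\lambda y]+y\cdot[x_\lambda z],
\]
which is \eqref{eq:leib}.

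The only delicate point I expect is the substitution in \eqref{eq:pre-PVA2}. The operator $\partial$ inside $-\lambda-\partial$ acts on the whole output, as per the convention in \eqref{eq:PLCA-LCA}. I would check that this convention is read identically on both sides of \eqref{eq:pre-PVA2}, so that the renaming of variables is legitimate. Everything else is bookkeeping.
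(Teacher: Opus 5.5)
Your proof is correct and follows the paper's argument step for step. You cite the standard facts for the differential algebra and for the subadjacent Lie conformal algebra, then expand $[x_\lambda(y\cdot z)]$, apply \eqref{eq:pre-PVA1} to $x_\lambda(y*z)$ and $x_\lambda(z*y)$ and \eqref{eq:pre-PVA2} with $(x,y,z)\mapsto(y,z,x)$ to $(y*z+z*y)_{-\lambda-\partial}x$, and regroup. On the point you flagged: the regrouping $y*(x_\lambda z)-y*(z_{-\lambda-\partial}x)=y*[x_\lambda z]$ is valid when $\partial$ acts only on the coefficients of the inner product $z_\mu x$, not on the outer $*$-product, and the paper implicitly reads \eqref{eq:pre-PVA2} this way.
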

\begin{proof}
Let $(A,\ast,\cdot\,_{\lambda}\,\cdot,\partial)$ be a pre-Poisson vertex algebra. Since $(A,\ast,\partial)$ is a differential Zinbiel algebra, $(A,\cdot,\partial)$ is a differential algebra. Since $(A,\cdot\,_{\lambda}\,\cdot,\partial)$ is a pre-Lie conformal algebra, $(A,[\cdot\,_\lambda \,\cdot],\partial)$ is a Lie conformal algebra. In the following, we only need to show that the Leibniz rule holds. In fact, by \eqref{eq:pre-PVA1} and \eqref{eq:pre-PVA2}, we have 
\begin{align*}
	[x_\lambda (y\cdot z)] &= x_\lambda(y*z + z*y) - (y*z+z*y)_{-\lambda-\partial}x \\
	&= \big[ (x_\lambda y - y_{-\lambda-\partial}x)*z + y*(x_\lambda z) \big] + \big[ (x_\lambda z - z_{-\lambda-\partial}x)*y + z*(x_\lambda y) \big] \\
	&\quad - \big[ z*(y_{-\lambda-\partial}x) + y*(z_{-\lambda-\partial}x) \big]  \\
	&= (x_\lambda y - y_{-\lambda-\partial}x)*z + z*(x_\lambda y) + (x_\lambda z - z_{-\lambda-\partial}x)*y + y*(x_\lambda z) \\
	&\quad - z*(y_{-\lambda-\partial}x) - y*(z_{-\lambda-\partial}x).
\end{align*}
On the other hand, we have
\begin{align*}
	[x_\lambda y]\cdot z &= (x_\lambda y - y_{-\lambda-\partial}x)\cdot z = (x_\lambda y - y_{-\lambda-\partial}x)*z + z*(x_\lambda y - y_{-\lambda-\partial}x), \\
	y\cdot [x_\lambda z] &= y\cdot (x_\lambda z - z_{-\lambda-\partial}x) = y*(x_\lambda z - z_{-\lambda-\partial}x) + (x_\lambda z - z_{-\lambda-\partial}x)*y.
\end{align*}
These equalities imply that the Leibniz rule holds.
\end{proof}
Recall \cite{Cha} that a {\bf perm algebra} is a vector space $A$ equipped with a linear operation $\circ:A\otimes A\to A$ satisfying
\begin{equation}\label{eq:perm}
	x\circ (y\circ z)=(x\circ y)\circ z=(y\circ x)\circ z,\quad \forall x,y,z\in A.
\end{equation}

Combining the perm algebra and pre-Poisson vertex algebra, we can get Poisson vertex algebras. 
\begin{pro}\label{pro:constructPVA}
	Let $(A,\ast,\cdot\,_{\lambda}\,\cdot,\partial)$ be a pre-Poisson vertex algebra and $(B,\circ)$ be a perm algebra. Define the linear operations $\partial:B\otimes A\to B\otimes A $,  $\cdot:(B\otimes A)\otimes (B\otimes A)\to B\otimes A$ and the $\lambda$-bracket $[\cdot\,_\lambda\,\cdot]:(B\otimes A)\otimes (B\otimes A)\to B\otimes A[\lambda]$ by
	\begin{eqnarray*}
		\partial(p\otimes x)&=&p\otimes (\partial x),\\	
		(p\otimes x)\cdot (q\otimes y)&=&p\circ q\otimes (x\ast y)+q\circ p\otimes (y\ast x),\\
		{	[p\otimes x_\lambda q\otimes y]}&=&p\circ q\otimes (x_\lambda y)+q\circ p\otimes (y_{-\lambda-\partial}x),\quad \forall x,y\in A, p,q\in B.
	\end{eqnarray*}
	Then $(B\otimes A,\cdot,[\cdot\,_\lambda\,\cdot],\partial)$ is a Poisson vertex algebra.
\end{pro}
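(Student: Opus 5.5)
The plan is to verify the axioms of a Poisson vertex algebra on $B\otimes A$ directly. The main simplification comes from a structural property of perm algebras. By \eqref{eq:perm}, any product of three elements $p,q,r\in B$, in any bracketing and order, depends only on which element is rightmost. Indeed, $x\circ(y\circ z)=(x\circ y)\circ z=(y\circ x)\circ z$, so the two left factors may be freely permuted and rebracketed. I write $\langle pq\,|\,r\rangle$ for the common value of all triple products ending in $r$.

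Consequently, every identity to be checked, being trilinear in $p\otimes x$, $q\otimes y$, $r\otimes z$, splits into three pieces according to whether $p$, $q$ or $r$ is rightmost. Each piece has a coefficient $\langle\cdot\,|\,\cdot\rangle\in B$ tensored with an expression in $A$. It will suffice to show that each $A$-expression vanishes using the axioms of $A$. The routine binary axioms come first, followed by the ternary ones.

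\textbf{Binary axioms.}
\begin{itemize}
\item $\partial$ is well defined on $B\otimes A$, and it is a derivation of $\cdot$ because $\partial$ is a derivation of $\ast$.
\item Commutativity of $\cdot$ is immediate, since the defining formula is symmetric under $(p,x)\leftrightarrow(q,y)$.
\item Sesquilinearity of the bracket follows termwise from \eqref{eq:conf}. For the second summand one uses $(\partial y)_{-\lambda-\partial}x=(\lambda+\partial)\,y_{-\lambda-\partial}x$ and $y_{-\lambda-\partial}(\partial x)=-\lambda\, y_{-\lambda-\partial}x$.
\item Skewsymmetry holds because replacing $\lambda$ by $-\lambda-\partial$ and swapping the arguments exchanges the two summands of the bracket, up to the sign fixed by \eqref{eq:PLCA-LCA}.
\end{itemize}

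\textbf{Associativity of $\cdot$.} I expand $((p\otimes x)\cdot(q\otimes y))\cdot(r\otimes z)$ and $(p\otimes x)\cdot((q\otimes y)\cdot(r\otimes z))$ and collect the coefficient of each $\langle\cdot\,|\,\cdot\rangle$. For rightmost $r$, the left side gives $(x\ast y+y\ast x)\ast z$ and the right side gives $x\ast(y\ast z)+y\ast(x\ast z)$. By \eqref{eq:Zinbiel-associative} this is exactly $(x\cdot y)\ast z = x\ast(y\ast z)$ summed symmetrically, i.e.\ the Zinbiel identity \eqref{eq:Zinbiel alg.}. The other two cases are analogous, with roles of the arguments permuted.

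\textbf{Leibniz rule \eqref{eq:leib}.} I expand both sides and group by the rightmost element.
\begin{itemize}
\item Rightmost $r$: one gets $x_\lambda(y\ast z)$ and $(y_{-\lambda-\partial}x)$-type terms, and equality reduces to \eqref{eq:pre-PVA1}.
\item Rightmost $q$: this is symmetric to the previous case.
\item Rightmost $p$: one gets $(y\ast z+z\ast y)_{-\lambda-\partial}x$ against $y\ast(z_{-\lambda-\partial}x)+z\ast(y_{-\lambda-\partial}x)$, which is \eqref{eq:pre-PVA2}.
\end{itemize}

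\textbf{Jacobi identity.} This is the step I expect to be the main obstacle, because of the bookkeeping with $-\lambda-\partial$ substitutions inside nested brackets. The expansion of $[a_\lambda[b_\mu c]]-[b_\mu[a_\lambda c]]-[[a_\lambda b]_{\lambda+\mu}c]$ produces twelve terms in $A$, each with a perm coefficient. Grouping by the rightmost element reduces the claim to three identities in $A$.
\begin{itemize}
\item Rightmost $r$: the identity is \eqref{eq:lsc} directly.
\item Rightmost $p$ or $q$: the identities are versions of \eqref{eq:lsc} after substituting $\lambda\mapsto-\lambda-\mu-\partial$ and similar, applied to rewritten arguments such as $z_{-\lambda-\mu-\partial}$.
\end{itemize}
I would handle the last two cases by rewriting each term via \eqref{eq:conf} so that $\partial$ acts on the outside, then comparing with \eqref{eq:lsc} applied to suitably permuted arguments. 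An alternative for these cases is to note that they are forced by the first case together with skewsymmetry, which is already established. This mirrors the standard argument that the bracket \eqref{eq:PLCA-LCA} from a pre-Lie conformal algebra satisfies Jacobi.
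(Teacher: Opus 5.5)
Your strategy is the paper's. It also expands each axiom and collects terms along the three perm monomials $p\circ(q\circ r)$, $q\circ(r\circ p)$, $p\circ(r\circ q)$, which are your rightmost-$r$, $p$, $q$ classes. It then kills each $A$-coefficient with \eqref{eq:Zinbiel alg.}, \eqref{eq:lsc}, \eqref{eq:pre-PVA1} and \eqref{eq:pre-PVA2}. Your twelve-term Jacobi bookkeeping and your three Leibniz cases agree with its computation. Two points need repair.

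\textbf{The sign of the bracket.} As printed, $[p\otimes x_\lambda q\otimes y]=p\circ q\otimes x_\lambda y+q\circ p\otimes y_{-\lambda-\partial}x$ is symmetric, not skewsymmetric. Swapping the arguments and putting $\lambda\mapsto-\lambda-\partial$ returns the same two summands with a plus sign. Concretely, take $\ast=0$, $B=\mathbb{C}b$ with $b\circ b=b$, and $A=\mathbb{C}[\partial]u$ with $u_\lambda u=u$. This is a pre-Poisson vertex algebra, and $[b\otimes u_\lambda b\otimes u]=2\,b\otimes u$, whereas skewsymmetry would force $-2\,b\otimes u$.

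Your other reductions also fail with this sign. The rightmost-$r$ part of Leibniz becomes $x_\lambda(y\ast z)=y\ast(x_\lambda z)+(x_\lambda y+y_{-\lambda-\partial}x)\ast z$, which is not \eqref{eq:pre-PVA1}. The rightmost-$r$ part of Jacobi contains $-(y_\mu x)_{\lambda+\mu}z$, so it is not \eqref{eq:lsc}. The bracket has to be $p\circ q\otimes x_\lambda y-q\circ p\otimes y_{-\lambda-\partial}x$, mirroring \eqref{eq:PLCA-LCA}. The paper's expansions, e.g. the term $+(y_\mu x)_{\lambda+\mu}z$ in its Jacobi computation, are consistent only with this sign. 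Your phrase ``up to the sign fixed by \eqref{eq:PLCA-LCA}'' should become an explicit correction of the definition, since every later step depends on it.

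\textbf{Associativity.} In $(p\otimes x)\cdot\big((q\otimes y)\cdot(r\otimes z)\big)$ every term has $x$ as an outer factor. So the rightmost-$r$ coefficient is just $x\ast(y\ast z)$, not $x\ast(y\ast z)+y\ast(x\ast z)$. With your expression the required identity would be false, since by \eqref{eq:Zinbiel alg.} its right side equals $2(x\ast y+y\ast x)\ast z$.

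The correct comparison in that case, $(x\ast y+y\ast x)\ast z=x\ast(y\ast z)$, is \eqref{eq:Zinbiel alg.} verbatim. So is the rightmost-$p$ case, $z\ast(y\ast x)=(z\ast y+y\ast z)\ast x$. The rightmost-$q$ case, $z\ast(x\ast y)=x\ast(z\ast y)$, follows by expanding both sides with \eqref{eq:Zinbiel alg.}.

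Finally, your fallback for the rightmost-$p$ and $q$ Jacobi cases deduces them from the rightmost-$r$ case and skewsymmetry. That works only if you can compare coefficients of the three perm monomials, for instance by first working in the free perm algebra on $p,q,r$, where they are linearly independent. The direct route via \eqref{eq:conf} and \eqref{eq:lsc}, which is the paper's, avoids this.
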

\begin{proof}
	First, we show that $(B\otimes A,\cdot,\partial)$ is a differential algebra. For $p,q,t\in B$ and $x,y,z\in A$, we have
	\begin{eqnarray*}
		\big((p\otimes x)\cdot (q\otimes y)	\big)\cdot (t\otimes z)&=&(p\circ q)\circ t\otimes (x\ast y)\ast z+t\circ(p\circ q)\otimes z\ast(x\ast y)\\
		&&+(q\circ p)\circ t\otimes (y\ast x)\ast z+t\circ (q\circ p)\otimes z\ast (y\ast x);\\
		(p\otimes x)\cdot \big((q\otimes y)	\cdot (t\otimes z)\big)&=&p\circ (q\circ t)\otimes x\ast (y\ast z)+(q\circ t)\circ p\otimes (y\ast z)\ast x\\
		&&+p\circ (t\circ q)\otimes x\ast (z\ast y)+(t\circ q)\circ p\otimes (z\ast y) \ast x.
	\end{eqnarray*}
	By \eqref{eq:perm} and \eqref{eq:Zinbiel alg.},  we have 
	\begin{align*}
		t\circ(p\circ q)\otimes z\ast(x\ast y)=p\circ (t\circ q)\otimes x\ast (z\ast y)&,\\
		(p\circ q)\circ t\otimes (x\ast y)\ast z+(q\circ p)\circ t\otimes (y\ast x)\ast z-p\circ (q\circ t)\otimes x\ast (y\ast z)&=0,\\
		t\circ (q\circ p)\otimes z\ast (y\ast x)-(q\circ t)\circ p\otimes (y\ast z)\ast x-(t\circ q)\circ p\otimes (z\ast y) \ast x&=0,
	\end{align*}
	which imply that 
	$$\big((p\otimes x)\cdot (q\otimes y)	\big)\cdot (t\otimes z)=(p\otimes x)\cdot \big((q\otimes y)	\cdot (t\otimes z)\big).$$
	The commutativity of  $\cdot$ is clear. The condition $\partial(x\ast y)=\partial (x)\ast y+x\ast \partial (y)$ for $x,y\in A$ implies that
	$$\partial\big(	(p\otimes x)\cdot (q\otimes y)\big)=\partial\big(p\otimes x\big)\cdot (q\otimes y)+(p\otimes x)\cdot \partial\big(q\otimes y\big).$$
	Thus $(B\otimes A,\cdot,\partial)$ is a differential algebra. 
	
	The sesquilinearity of $\lambda$-bracket $[\cdot\,_\lambda\,\cdot]$ on $B\otimes A$ follows by the sesquilinearity of pre-Lie $\lambda$-operation $\cdot\,_\lambda\,\cdot$ on $A$. The skewsymmetry of  $\lambda$-bracket $[\cdot\,_\lambda\,\cdot]$ on $B\otimes A$ follows by the definition of this $\lambda$-bracket. In the following, we show that the $\lambda$-bracket $[\cdot\,_\lambda\,\cdot]$ on $B\otimes A$ satisfies the Jacobi identity. By \eqref{eq:perm} and the sesquilinearity of pre-Lie $\lambda$-operation $\cdot\,_\lambda\,\cdot$ on $A$, we have
	\begin{align*}
		&[p\otimes x_\lambda [q\otimes y_\mu t\otimes z]]-[[p\otimes x_\lambda q\otimes y]_{\lambda+\mu}t\otimes z]-[q\otimes y_\mu [p\otimes x_\lambda t\otimes z]]\\
		&=p\circ(q\circ t)\otimes \Big(x_\lambda(y_\mu z)-y_\mu(x_\lambda z)-(x_\lambda y)_{\lambda+\mu}z+(y_{\mu}x)_{\lambda+\mu} z\Big)\\
		&+q\circ(t\circ p)\otimes \Big(y_\mu(z_{-\lambda-\mu-\partial} x)-z_{-\lambda-\mu-\partial}(y_\mu x)-(y_\mu z)_{-\lambda-\partial}x+(z_{-\lambda-\mu-\partial}y)_{-\lambda-\partial} x\Big)\\
		&-p\circ(t\circ q)\otimes \Big(x_\lambda(z_{-\lambda-\mu-\partial} x)-z_{-\lambda-\mu-\partial}(x_\lambda y)-(x_\lambda z)_{-\mu-\partial}y+(z_{-\lambda-\mu-\partial}x)_{-\mu-\partial} y\Big)
	\end{align*}
	in which the $\partial$ acts from the left. Furthermore, by \eqref{eq:lsc}, the Jacobi identity follows. Thus $(B\otimes A,[\cdot\,_\lambda\,\cdot],\partial)$ is a Lie conformal algebra.
	
	Finally, by \eqref{eq:pre-PVA1} and \eqref{eq:pre-PVA2},  we have
	\begin{align*}
		&[p\otimes x_\lambda \big((q\otimes y)\cdot (t\otimes z)	\big)]-[p\otimes x_\lambda q\otimes y]\cdot (t\otimes z)-[p\otimes x_\lambda t\otimes z]\cdot (q\otimes y)\\
		&=p\circ(q\circ t)\otimes \Big(  x_\lambda (y * z) - y * (x_\lambda z)  -(x_\lambda y)\ast z + (y_{-\lambda-\partial} x) * z\Big)\\
		&+p\circ(t\circ q)\otimes \Big( x_\lambda (z* y) - z* (x_\lambda y)  -(x_\lambda z)\ast y + (z_{-\lambda-\partial} x) * y   \Big)\\
		&-q\circ(t\circ p)\otimes \Big( (y\ast z)_{-\lambda-\partial}x+ (z\ast y)_{-\lambda-\partial}x -z\ast(y_{-\lambda-\partial}x)-y\ast(z_{-\lambda-\partial}x) \Big)=0.
	\end{align*}
	
	Therefore, $(B\otimes A,\cdot,[\cdot\,_\lambda\,\cdot],\partial)$ is a Poisson vertex algebra.
\end{proof}

\begin{defi}
Let $(A,\cdot,\partial)$ be a differential algebra. If  a linear map $T: A \to A$ satisfies $T\circ \partial=\partial\circ T$ and 
\begin{equation}\label{eq:BO on differential alg}
T(x) \cdot T(y) = T\bigl(T(x) \cdot y + x \cdot T(y)\bigr), \quad \forall x, y \in A,
\end{equation}
then $T$ is called a {\bf  Rota-Baxter operator} on the differential algebra $A$.
\end{defi}
Let $T$ be a Rota-Baxter operator on a differential algebra $A$. It is straightforward to check that the new operation $\ast:A\otimes A\to A$ defined by
\begin{equation}\label{eq:RB-zin}
	x * y = T(x) \cdot y,\quad \forall x,y\in A
	\end{equation}
makes $A$ become a differential Zinbiel algebra with the same differential $\partial$.

\begin{defi}
	Let $(P,\cdot,[\cdot\,_\lambda\, \cdot],\partial)$ be a Poisson vertex algebra and $T:P\to P$ be a linear map. If $T$ is both a Rota-Baxter operator on the differential  algebra $(P,\cdot,\partial)$ and the Lie conformal algebra $(P,[\cdot\,_\lambda\, \cdot],\partial)$, then $T$ is called a {\bf Rota-Baxter operator} on the Poisson vertex algebra $P$.
	\end{defi}

The following proposition shows that any Rota-Baxter operator on Poisson vertex algebra can
induce a pre-Poisson vertex algebra, and any pre-Poisson vertex algebra can be embedded into a Poisson vertex algebra with a Rota-Baxter operator.

\begin{pro}
	Let $T$ be a Rota-Baxter operator on the Poisson vertex algebra $(P,\cdot,[\cdot\,_\lambda\, \cdot],\partial)$. Then $(P,\ast,\cdot\,_{\lambda}\,\cdot,\partial)$ is a pre-Poisson Vertex algebra, where the operation $\ast$ is given by \eqref{eq:RB-zin} and the $\lambda$-operation $\cdot\,_{\lambda}\,\cdot$ is given by \eqref{eq:RB-pre-Lie}.
	
		Conversely, for every pre-Poisson vertex algebra $(A,\ast,\cdot\,_{\lambda}\,\cdot,\partial)$, there exists a Poisson vertex algebra $(P,\cdot,[\cdot\,_\lambda\, \cdot],{\partial})$ with a Rota-Baxter operator $T:P\rightarrow P$ such that $A$ can be identified as a subalgebra of the pre-Poisson vertex algebra obtained from $T$. 
\end{pro}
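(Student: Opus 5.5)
The plan mirrors the proof of Proposition \ref{pro:RB-embed}. For the first part, the differential Zinbiel structure on $(P,\ast,\partial)$ comes from the remark after \eqref{eq:RB-zin}. The pre-Lie conformal structure $x_\lambda y=[T(x)_\lambda y]$ comes from the result of \cite{HB2} quoted after \eqref{eq:RB-pre-Lie}, using that $T$ is a $\mathbb{C}[\partial]$-module map. It remains to check \eqref{eq:pre-PVA1} and \eqref{eq:pre-PVA2}.

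For \eqref{eq:pre-PVA1}, substitute the definitions. The left side becomes
$$T\big([T(x)_\lambda y]-[T(y)_{-\lambda-\partial}x]\big)\cdot z.$$
By skewsymmetry, $-[T(y)_{-\lambda-\partial}x]=[x_\lambda T(y)]$, so the Rota--Baxter identity for the bracket turns the left side into $[T(x)_\lambda T(y)]\cdot z$. The right side is
$$[T(x)_\lambda (T(y)\cdot z)]-T(y)\cdot[T(x)_\lambda z],$$
which equals $[T(x)_\lambda T(y)]\cdot z$ by the Leibniz rule \eqref{eq:leib}.

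For \eqref{eq:pre-PVA2}, the left side is $[T(T(x)\cdot y+T(y)\cdot x)_{-\lambda-\partial}z]$. Using commutativity and the Rota--Baxter identity for $\cdot$, this equals $[(T(x)\cdot T(y))_{-\lambda-\partial}z]$. By skewsymmetry it becomes $-[z_\lambda (T(x)\cdot T(y))]$, and the Leibniz rule expands it into
$$-T(y)\cdot[z_\lambda T(x)]-T(x)\cdot[z_\lambda T(y)].$$
Skewsymmetry converts each term back, since $-[z_\lambda T(x)]=[T(x)_{-\lambda-\partial}z]$. The result is $T(y)\cdot(x_{-\lambda-\partial}z)+T(x)\cdot(y_{-\lambda-\partial}z)$, which is exactly the right side $y\ast(x_{-\lambda-\partial}z)+x\ast(y_{-\lambda-\partial}z)$. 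The main care needed is bookkeeping of where $\partial$ acts when the Leibniz rule is combined with skewsymmetry. This is legitimate because the Leibniz rule yields the right Leibniz rule $[(a\cdot b)_\lambda c]=[a_{\lambda+\partial}c]_\to b+[b_{\lambda+\partial}c]_\to a$, and with $\lambda$ replaced by $-\lambda-\partial$ the arrows resolve consistently.

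For the converse, I take $P=A\oplus A'$ with $\tilde\partial$ acting diagonally. On $P$ I define a commutative product and a $\lambda$-bracket:
$$x\cdot y=x\ast y+y\ast x,\qquad x\cdot y'=(y\ast x)'\ \text{(symmetrically for }x'\cdot y),\qquad x'\cdot y'=0,$$
$$[x_\lambda y]=x_\lambda y-y_{-\lambda-\partial}x,\qquad [x_\lambda y']=(x_\lambda y)',\qquad [x'_\lambda y]=-(y_{-\lambda-\partial}x)',\qquad [x'_\lambda y']=0.$$
So $P$ is the semidirect product of the Poisson vertex algebra of the preceding proposition with its module $A$, which carries the actions $L_\ast$ and $x_\lambda\cdot$.

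The key verification, and the main obstacle, is that this is a Poisson vertex algebra. Associativity of the module action reduces to the Zinbiel identity. The Jacobi identity on mixed terms reduces to \eqref{eq:lsc}. The Leibniz rule on mixed terms reduces precisely to \eqref{eq:pre-PVA1} and \eqref{eq:pre-PVA2}. I would organize this check case by case according to how many arguments lie in $A'$; cases with two or more primed entries vanish.

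Then I set $T|_A=0$ and $T(x')=x$. As in Proposition \ref{pro:RB-embed}, only the case of two primed arguments is nontrivial. There
$$T\big((x\ast y)'+(y\ast x)'\big)=x\cdot y=T(x')\cdot T(y'),$$
and the bracket case is identical to the earlier computation.

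Finally, the induced structure on $A'$ is
$$x'\ast y'=T(x')\cdot y'=(x\ast y)',\qquad x'_\lambda y'=[x_\lambda y']=(x_\lambda y)'.$$
Hence $j(x)=x'$ embeds $A$ as a subalgebra of the pre-Poisson vertex algebra obtained from $T$.
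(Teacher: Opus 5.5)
Your verification of \eqref{eq:pre-PVA1} and \eqref{eq:pre-PVA2} is the same computation as in the paper: skewsymmetry, the two Rota--Baxter identities, and the Leibniz rule. The chain skewsymmetry--Leibniz--skewsymmetry already places $\partial$ correctly, so you do not need the detour through the right Leibniz rule.

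For the converse you build the same object by a different route. The paper takes $P=B\otimes A$ for the two-dimensional perm algebra $e\circ e=e$, $e\circ f=f$, $f\circ e=f\circ f=0$, and gets the Poisson vertex algebra axioms from the general Proposition \ref{pro:constructPVA}. The bracket displayed there has a sign slip: the term $q\circ p\otimes(y_{-\lambda-\partial}x)$ must enter with a minus sign, otherwise skewsymmetry fails. With that correction and $e\otimes x\mapsto x$, $f\otimes x\mapsto x'$, the paper's $P$ is exactly your $A\oplus A'$, with $x\cdot y'=(x\ast y)'$, $[x_\lambda y']=(x_\lambda y)'$, $[x'_\lambda y]=-(y_{-\lambda-\partial}x)'$ and the same $T$. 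The paper's route is modular and yields Poisson vertex algebras from any perm algebra, but it hides which axiom does what. Your direct semidirect-product check is self-contained, has the correct sign, and parallels the proof of Proposition \ref{pro:RB-embed}. It also makes visible that the Zinbiel identity gives associativity of the action and \eqref{eq:lsc} gives the mixed Jacobi identities. Likewise, \eqref{eq:pre-PVA1} and \eqref{eq:pre-PVA2} are precisely the nontrivial mixed Leibniz cases $[x_\lambda(y\cdot z')]$ and $[x'_\lambda(y\cdot z)]$.

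One slip must be fixed. Your displayed rule $x\cdot y'=(y\ast x)'$ is right multiplication, which contradicts your stated action $L_\ast$ and your final formula $x'\ast y'=T(x')\cdot y'=(x\ast y)'$. Taken literally, it breaks associativity: $(x\cdot y)\cdot z'=(z\ast(x\cdot y))'$, whereas $x\cdot(y\cdot z')=((z\ast y)\ast x)'$. Right multiplication is not a representation of $(A,\cdot)$. For example, take the Zinbiel product $x\ast y=\big(\int_0^t x(s)\,ds\big)\,y$ on $\mathbb{C}[t]$ with $x=y=z=1$; the two sides are $2t^2$ and $t^2/2$. The literal rule would also make $j$ an anti-homomorphism. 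With $x\cdot y'=(x\ast y)'$ and $x'\cdot y=(y\ast x)'$, everything you wrote goes through.
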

\begin{proof}
By the fact that $T$ is a Rota-Baxter operator on the differential  algebra $(P,\cdot,\partial)$, $(P,\ast,\partial)$ is a differential Zinbiel algebra. By the fact that $T$ is a Rota-Baxter operator on the Lie conformal algebra $(P,[\cdot\,_{\lambda}\,\cdot],\partial)$, $(P,\cdot\,_{\lambda}\,\cdot,\partial)$ is a pre-Lie conformal algebra. We only need to show that the compatibility conditions \eqref{eq:pre-PVA1} and \eqref{eq:pre-PVA2} hold. 
	
By the Leibniz rule and the  fact that $T$ is a Rota-Baxter operator, we have
\begin{eqnarray*}
	&&(x_{\lambda}y - y_{-\lambda-\partial}x) * z-x_{\lambda}(y*z)+ y*(x_{\lambda}z)  \\ 
	&=& T\bigl([T(x)_{\lambda}y] - [T(y)_{-\lambda-\partial}x]\bigr) \cdot z-[T(x)_{\lambda}(T(y)\cdot z)] +T(y)\cdot [T(x)_{\lambda}z] \\
	&=& T\bigl([T(x)_{\lambda}y] + [x_{\lambda}T(y)]\bigr) \cdot z -[T(x)_{\lambda}T(y)]\cdot z-T(y)\cdot [T(x)_{\lambda}z]+T(y)\cdot [T(x)_{\lambda}z]\\
	&=& T\bigl([T(x)_{\lambda}y] + [x_{\lambda}T(y)]\bigr) \cdot z -[T(x)_{\lambda}T(y)]\cdot z=0,
\end{eqnarray*}	
	which implies that \eqref{eq:pre-PVA1} holds.
	
	Similarly, we have 
\begin{eqnarray*}
	&&(x* y + y * x)_{-\lambda - \partial} z-  y * (x_{-\lambda - \partial} z) - x * (y_{-\lambda - \partial} z)\\
	&=&[T\bigl(T(x) \cdot y + T(y) \cdot x\bigr)_{-\lambda - \partial} z] -T(y)\cdot[T(x)_{-\lambda - \partial} z] -T(x) \cdot [T(y)_{-\lambda - \partial} z]\\
	&=& [\big(T(x) \cdot T(y)\big)_{-\lambda - \partial} z]  +T(y)\cdot[z_\lambda T(x)]+T(x) \cdot [z_\lambda T(y)] \\
	&=& -[z_\lambda \big(T(x) \cdot T(y)\big)]+ [z_\lambda T(x)] \cdot T(y) +T(x) \cdot [z_\lambda T(y)]=0,
\end{eqnarray*}
which implies that \eqref{eq:pre-PVA2} holds.

Conversely, let $(A,\ast,\cdot\,_{\lambda}\,\cdot,\partial)$ be a pre-Poisson vertex algebra. Then by Proposition \ref{pro:constructPVA}, $P=B\otimes A$ is a Poisson vertex algebra, where $B$ is the $2$-dimension perm algebra with a basis $e,f$ defined by 
$$e\circ e=e,\quad e\circ f=f,\quad f\circ e=f\circ f=0.$$
Then $B\otimes A=(\mathbb{C}e\otimes A)\oplus  (\mathbb{C}f\otimes A)$. Define a linear map $T:P\rightarrow P$ by 
$$T(e\otimes x)=0,\quad T(f\otimes y)= e\otimes y,\quad \forall~x,y\in A.$$
It is clear that $T \tilde{\partial} = \tilde{\partial} T$ and  $T$ is  a Rota–Baxter operator on the differential algebra $(P, \cdot, {\partial})$. We need to verify that $T$ is Rota–Baxter operator on the Lie conformal algebra $(P, [\cdot\,_\lambda\,\cdot],{\partial})$. In fact, Because  $T(e\otimes x)=0$ for any $x\in A$, the condition \eqref{def:baxter-lie-conformal} follows immediately for all cases where $e\otimes x ,e\otimes y\in P$, or $e\otimes x ,f\otimes y\in P$, or $f\otimes y ,e\otimes x\in P$.  For $f\otimes x ,f\otimes y\in P$,  we have  
$$[T(f\otimes x)_{\lambda} T(f\otimes y)] = [(e\otimes x)_{\lambda} (e\otimes y)]=e\otimes (x_\lambda y)+e\otimes (y_{-\lambda-\partial} x),$$
while  
$$[T(f\otimes x)_{\lambda} (f\otimes y)]= f\otimes (x_\lambda y), \quad [(f\otimes x)_{\lambda} T(f\otimes y)] = f\otimes (y_{-\lambda - \partial} x).$$
Hence,  $T$ is a Rota–Baxter operator on the Lie conformal algebra $(P, [\cdot\,_\lambda\,\cdot],{\partial})$. Consequently,  $T$ is  a Rota–Baxter operator on the Poisson vertex algebra $P$. Moreover, the pre-Poisson vertex algebra induced by the Rota–Baxter operator $T$ is given by
\begin{align*}
	(e\otimes x) \ast	(e\otimes y)&=(e\otimes x) \ast	(f\otimes y)=0,&(f\otimes y) \ast	(e\otimes x)&=(e\otimes y) \cdot	(e\otimes x),\\
		(f\otimes x) \ast	(f\otimes y)&=f\otimes (x\ast y),&(e\otimes x)_\lambda	(e\otimes y)&=(e\otimes x)_\lambda	(f\otimes y)=0,\\
	(f\otimes y)_\lambda(e\otimes x)&=[(e\otimes y)_\lambda(e\otimes x)],&	(f\otimes x)_\lambda(f\otimes y)&=f\otimes (x_\lambda y). 
\end{align*}

Define an embedding $j:A\rightarrow P$ by $j(x)=f\otimes x$ for all $x\in A$. It is clear that 
\begin{align*}
	j(x\ast y)&=f\otimes (x\ast y)=	(f\otimes x) \ast	(f\otimes y)=j(x)\ast j(y),\\
	j(x _\lambda y)&=f\otimes (x_\lambda y)=(f\otimes x)_\lambda(f\otimes y)=j(x)_\lambda j(y).
\end{align*}
Thus $A$ can be identified as  a subalgebra of the pre-Poisson vertex algebra $(P,\ast,\cdot\,_\lambda\,\cdot,\partial)$ induced by the Rota–Baxter operator $T$. 
\end{proof}

\section{Pre-Poisson vertex algebra, pre-vertex algebras and deformation quantizations}\label{sec:def}
In this section, we first introduce the notion of a filtration of pre-vertex algebra and show that such a filtration naturally gives rise to a pre-Poisson vertex algebra. We then introduce the concept of a pre-vertex formal deformation of a differential Zinbiel algebra and demonstrate that its classical limit is a pre-Poisson vertex algebra.
\subsection{Pre-Poisson vertex algebras from filtration of pre-vertex algebras}\label{sec:T}
In the following, we generalize the relationship between filtrations of vertex algebras and Poisson vertex algebras to the case of pre-vertex algebras and pre-Poisson vertex algebras.

A {\bf filtration of pre-vertex algebra} $(A,\triangleright ,\triangleleft,\cdot\,_{\lambda}\,\cdot,\partial)$ is an increasing sequence of subspaces  \(A_0 \subseteq A_1 \subseteq A_2 \subseteq \cdots\) such that 
\[
A = \bigcup_{n=0}^{\infty} A_n \quad \text{and} \quad A_n \triangleright A_m + A_n \triangleleft A_m \subseteq A_{n+m}, \quad A_{m{\lambda}}A_{n}\subseteq A_{m+n-1}[\lambda],\quad \partial A_n\subset A_{n}.
\]
In this situation, the associated graded space
\[
\operatorname{Gr}(A) = \bigoplus_{n=1}^{\infty} A_{n}/A_{n-1}
\]
inherits natural operations from pre-vertex algebra $A$. For homogeneous elements $\bar{x}=x+A_{n-1}\in A_n/A_{n-1}$ and $\bar{y}=y+A_{m-1}\in A_m/A_{m-1}$, we define
\begin{align*}
\bar{x} \triangleright \bar{y} = x \triangleright y + &A_{n+m-1} \in A_{n+m}/A_{n+m-1}, \quad\bar{x} \triangleleft \bar{y} = x \triangleleft y + A_{n+m-1} \in A_{n+m}/A_{n+m-1},\\
\bar{x}_{\lambda}\bar{y}&=x_{\lambda}y+A_{n+m-2}[\lambda]\in (A_{n+m-1}/A_{n+m-2})[\lambda],\quad\partial \bar{x} = \overline{\partial x}.
\end{align*}
These operations are well-defined followed by the filtration conditions. 

\begin{pro}
Let  $A= \bigcup_{n=0}^{\infty} A_n$ be a  filtration of a pre-vertex algebra $(A,\triangleright ,\triangleleft,\cdot\,_{\lambda}\,\cdot,\partial)$.  Then  $({\rm Gr}(A),\ast,\cdot\,_{\lambda}\,\cdot,{\partial})$ is a pre-Poisson vertex algebra, where $\ast:A_n/A_{n-1}\times A_m/A_{m-1}\to A_{m+n}/A_{m+n-1}$ is defined by 
\begin{equation}
	\bar{x} \ast\bar{y}=\bar{x} \triangleright \bar{y}=\bar{y} \triangleleft \bar{x},\quad \forall \bar{x}\in A_n/A_{n-1},\bar{y}\in A_m/A_{m-1}.
\end{equation}
\end{pro}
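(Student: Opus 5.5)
The plan is to pass each pre-vertex axiom to the graded quotient. Every correction term involving an integral of a $\lambda$-product carries an extra $\partial$ or $\lambda$ combined with one more $\lambda$-product, and so it drops by one filtration degree. The argument has four steps.

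First, I check that $\ast$ is well defined, i.e.\ that $\bar x\triangleright\bar y=\bar y\triangleleft\bar x$ in $A_{n+m}/A_{n+m-1}$. By \eqref{eq:pre-VA1}, $x\triangleright y-y\triangleleft x=\int_{-\partial}^0 x_\lambda y\,d\lambda$. The right side is a combination of terms $\partial^{k+1}$ applied to the coefficients of $x_\lambda y$. These coefficients lie in $A_{n+m-1}$, and $\partial A_{n+m-1}\subset A_{n+m-1}$. Hence the difference vanishes in the quotient.

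Second, I show that $(\mathrm{Gr}(A),\ast,\partial)$ is a differential Zinbiel algebra. The derivation property is immediate from the differential L-dendriform structure. For \eqref{eq:Zinbiel alg.}, I use \eqref{eq:L-den1} modulo $A_{n+m+k-1}$ and replace every $\triangleleft$ by the reversed $\triangleright$ via the first step. This gives
$$\bar x\ast(\bar y\ast\bar z)=(\bar x\ast\bar y)\ast\bar z+(\bar y\ast\bar x)\ast\bar z+\bar y\ast(\bar x\ast\bar z)-(\bar x\ast\bar y)\ast\bar z-(\bar y\ast\bar x)\ast\bar z,$$
which collapses to $\bar x\ast(\bar y\ast\bar z)=\bar y\ast(\bar x\ast\bar z)$. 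This is not yet the Zinbiel identity, so \eqref{eq:L-den2} must be used as well. Rewritten via $u\triangleleft v=v\ast u$, it reads
$$(\bar y\ast\bar z)\ast\bar x \;\text{-type relation}:\quad \bar x\ast(\bar z\ast\bar y)=\bar z\ast(\bar x\ast\bar y)+(\bar x\ast\bar z)\ast\bar y+(\bar z\ast\bar x)\ast\bar y-\bar z\ast(\bar x\ast\bar y).$$
After cancellation this is exactly $\bar x\ast(\bar z\ast\bar y)=(\bar x\ast\bar z)\ast\bar y+(\bar z\ast\bar x)\ast\bar y$, which is the Zinbiel identity. I will double-check this bookkeeping; the main risk is mislabelling the variables.

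Third, I check that $\mathrm{Gr}(A)$ is a pre-Lie conformal algebra. The $\lambda$-product lands in degree $n+m-1$, so the identities \eqref{eq:conf} and \eqref{eq:lsc} hold in degree $n+m+k-2$ simply by reducing the corresponding identities in $A$. Well-definedness of the induced operations follows from the filtration conditions.

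Fourth, I verify the compatibilities \eqref{eq:pre-PVA1} and \eqref{eq:pre-PVA2}. I reduce \eqref{eq:pre-VA3} modulo $A_{n+m+k-2}$. The term $\int_0^\lambda(\cdots)_\mu z\,d\mu$ is a double $\lambda$-product and lies in $A_{n+m+k-3}[\lambda]$, so it vanishes. What remains is $x_\lambda(y\ast z)=(x_\lambda y-y_{-\lambda-\partial}x)\ast z+y\ast(x_\lambda z)$, which is \eqref{eq:pre-PVA1}. Similarly, I reduce \eqref{eq:pre-VA2}, dropping its integral term and converting $(y_{-\lambda-\partial}x)\triangleleft z$ into $z\ast(y_{-\lambda-\partial}x)$. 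This gives $(z\ast y+y\ast z)_{-\lambda-\partial}x=z\ast(y_{-\lambda-\partial}x)+y\ast(z_{-\lambda-\partial}x)$, which is \eqref{eq:pre-PVA2} after renaming the variables. Equation \eqref{eq:pre-VA4} reduces in the same way to a consequence of these two and needs no separate treatment.

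The main obstacle is the index bookkeeping. I must confirm that the terms $y_{-\lambda-\partial}x$, in which $\partial$ acts on the output, stay in the right filtration degree; this holds because $\partial A_n\subset A_n$. I must also confirm that each discarded integral term lies in strictly lower filtration degree.
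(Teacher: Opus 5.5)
Your proposal is correct and follows the paper's route: well-definedness of $\ast$ from \eqref{eq:pre-VA1}, the Zinbiel identity from \eqref{eq:L-den1} and \eqref{eq:L-den2}, the pre-Lie conformal axioms by reduction, and then \eqref{eq:pre-PVA1} and \eqref{eq:pre-PVA2} obtained from \eqref{eq:pre-VA3} and \eqref{eq:pre-VA2} by discarding the integral terms. Your version is more explicit than the paper's, in particular in the Zinbiel bookkeeping, which is right. One small slip: the double $\lambda$-product term lies in $A_{n+m+k-2}[\lambda]$, not $A_{n+m+k-3}[\lambda]$, but since you reduce modulo $A_{n+m+k-2}$ it still vanishes and the conclusion is unaffected.
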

\begin{proof}
Since  $x \triangleright  y-y\triangleleft x\in A_{m+n}$  and $x_\lambda y\in A_{m+n-1}[\lambda]$ for $x\in A_n,y\in A_m$,	by \eqref{eq:pre-VA1} and $\partial A_{m+n-1} \subset A_{m+n-1}$,  we have $\bar{x} \triangleright \bar{y}=\bar{y} \triangleleft \bar{x}$.  Then the conditions \eqref{eq:L-den1} and \eqref{eq:L-den2} in the definition of a L-dendriform  algebra imply that $({\rm Gr}(A),\ast)$ is a Zinbiel algebra. Also, we have
	$${\partial}(\bar{x}\ast \bar{y})=\overline{\partial (x\triangleright y)}=\overline{\partial (x)\triangleright y+ x\triangleright \partial(y)}=\partial(\bar{x})\ast \bar{y}+\bar{x}\ast \partial(\bar{y}).$$ 
	 Thus $({\rm Gr}(A),\ast,\partial)$ is a differential Zinbiel algebra. 
	
	Letting $\bar{x}\in A_n/A_{n-1}$, $\bar{y}\in A_m/A_{m-1}$ and $\bar{z}\in A_p/A_{p-1}$, by the fact that  $A$ is a pre-Lie  conformal algebra, we have
$$
	(\bar{x}_\lambda\bar{y})_{\lambda+\mu}\bar{z} - \bar{x}_\lambda(\bar{y}_\mu\bar{z})
	= (\bar{y}_\mu\bar{x})_{\lambda+\mu}\bar{z} - \bar{y}_\mu(\bar{x}_\lambda\bar{z}),
	$$
which implies that  \(({\rm Gr}(A),\cdot\,_\lambda\,\cdot)\) is a  pre-Lie conformal algebra.

By \eqref{eq:pre-VA3}, for $x\in A_n\), \(y\in A_m\), \(z\in A_p$, we have
\[
x_\lambda(y\triangleright z) = (x_\lambda y - y_{-\lambda-\partial}x)\triangleright z
+ y\triangleright(x_\lambda z) + \int_0^\lambda (x_\lambda y - y_{-\lambda-\partial}x)_\mu z \, d\mu.
\]
Note that $x_\lambda(y\triangleright z), (x_\lambda y - y_{-\lambda-\partial}x)\triangleright z,  y\triangleright(x_\lambda z)$ are all in $A_{n+m+p-1}[\lambda]$, while $\int_0^\lambda (x_\lambda y - y_{-\lambda-\partial}x)_\mu z \, d\mu$ in $A_{n+m+p-2}[\lambda]$. Furthermore, by the fact that $\bar{x} \ast\bar{y}=\bar{x} \triangleright \bar{y}$, we have 
\[
\bar{x}_\lambda(\bar{y}\triangleright\bar{z})
= (\bar{x}_\lambda\bar{y} - \bar{y}_{-\lambda-\partial}\bar{x})\triangleright\bar{z}
+ \bar{y}\triangleright(\bar{x}_\lambda\bar{z}).
\]
	
Similarly, we can get 	
$$
(x * y + y * x)_{-\lambda-\partial} z = y * (x_{-\lambda-\partial} z) + x * (y_{-\lambda-\partial} z).
$$
Thus $({\rm Gr}(A),\ast,\cdot_{\lambda}\cdot,{\partial})$ is a pre-Poisson vertex algebra.
\end{proof}

Recall that a {\bf filtration of non-unital vertex algebra} $(A,\circ,[\cdot\,_{\lambda}\,\cdot],\partial)$ is an increasing sequence of subspaces  \(A_0 \subseteq A_1 \subseteq A_2 \subseteq \cdots\) such that 
\[
A = \bigcup_{n=0}^{\infty} A_n \quad \text{and} \quad A_n \circ A_m  \subseteq A_{n+m}, \quad [A_{m{\lambda}}A_{n}]\subseteq A_{m+n-1}[\lambda],\quad \partial A_n\subset A_{n}.
\]
Furthermore, the associated graded space
\[
\operatorname{Gr}(A) = \bigoplus_{n=1}^{\infty} A_{n}/A_{n-1}
\]
inherits natural operations from the non-unital vertex algebra $A$. For homogeneous elements $\bar{x}=x+A_{n-1}\in A_n/A_{n-1}$ and $\bar{y}=y+A_{m-1}\in A_m/A_{m-1}$, we define
\begin{align*}
	\bar{x} \circ \bar{y} &= x \circ y + A_{n+m-1} \in A_{n+m}/A_{n+m-1}, \\
	[\bar{x}_{\lambda}\bar{y}]&=[x_{\lambda}y]+A_{n+m-2}[\lambda]\in (A_{n+m-1}/A_{n+m-2})[\lambda],\quad\partial \bar{x} = \overline{\partial x}.
\end{align*}
Then $\operatorname{Gr}(A) $ equipped with these structures is a Poisson vertex algebra \cite{LHS}. 

\begin{defi}
Let  $A = \bigcup_{n=0}^{\infty} A_n$ be a filtration of non-unital vertex algebra $(A,\circ,[\cdot\,_{\lambda}\,\cdot],\partial)$ and  let $T:A\rightarrow A$ be a Rota-Baxter operator on the  vertex algebra $A$. We say that $T$ is {\bf compatible} with the filtration if $T(A_n)\subset A_n$ for all $n\in \mathbb{Z}_+$. 
\end{defi}	

It is clear that a filtration of a pre-vertex algebra gives a filtration of the subadjacent non-unital vertex algebra. Conversely, we have
\begin{pro}
	Let $T: A \to A$ be a Rota-Baxter operator on the non-unital vertex algebra $A$ that is compatible with the filtration $A = \bigcup_{n=0}^{\infty} A_n$. Then
	\begin{itemize}
		\item[\rm(a)] the filtration $A = \bigcup_{n=0}^{\infty} A_n$ is a filtration of the pre-vertex algebra $(A,\triangleright ,\triangleleft,\cdot\,_{\lambda}\,\cdot,\partial)$ induced by $T$;
		\item[\rm(b)] $T$ induces a Rota-Baxter operator (also denoted by $T$) on the Poisson vertex algebra $\operatorname{Gr}(A)$;
		\item[\rm(c)] the pre-Poisson vertex algebra structure on $\operatorname{Gr}(A)$ obtained from the filtration of the pre-vertex algebra $(A,\triangleright ,\triangleleft,\cdot\,_{\lambda}\,\cdot,\partial)$ coincides with the pre-Poisson vertex algebra structure induced by the Rota-Baxter operator $T$ on the Poisson vertex algebra $\operatorname{Gr}(A)$.
	\end{itemize}
\end{pro}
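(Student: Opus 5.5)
The plan is to verify the three items directly from the definitions. The recurring tool is that $T$ preserves every $A_n$. From this the filtration inequalities for the induced operations follow immediately, and $T$ descends to the quotients.

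For (a), we use the definitions $x\triangleright y=T(x)\circ y$, $x\triangleleft y=x\circ T(y)$ and $x_\lambda y=[T(x)_\lambda y]$. Take $x\in A_n$ and $y\in A_m$. Then $T(x)\in A_n$ and $T(y)\in A_m$, so $x\triangleright y,\ x\triangleleft y\in A_n\circ A_m\subseteq A_{n+m}$ and $x_\lambda y\in [A_{n\lambda}A_m]\subseteq A_{n+m-1}[\lambda]$. The conditions $A=\bigcup A_n$ and $\partial A_n\subset A_n$ are unchanged. Hence the same sequence is a filtration of the pre-vertex algebra from Proposition \ref{pro:RB-embed}.

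For (b), since $T(A_n)\subseteq A_n$ for every $n$, we may define $T(\bar x)=\overline{T(x)}\in A_n/A_{n-1}$ for $\bar x\in A_n/A_{n-1}$. This is well defined because $T(A_{n-1})\subseteq A_{n-1}$. It commutes with $\partial$ because $T\partial=\partial T$ on $A$. Recall that $\operatorname{Gr}(A)$ carries the product $\bar x\cdot\bar y=\overline{x\circ y}$ (of degree $n+m$) and the bracket $[\bar x_\lambda\bar y]=\overline{[x_\lambda y]}$ (of degree $n+m-1$). The Rota--Baxter identity $T(x)\circ T(y)=T(T(x)\circ y+x\circ T(y))$ holds in $A_{n+m}$. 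Reducing it modulo $A_{n+m-1}$ gives the identity for $\cdot$ on $\operatorname{Gr}(A)$. Likewise, the conformal Rota--Baxter identity lives in $A_{n+m-1}[\lambda]$, and reducing it modulo $A_{n+m-2}[\lambda]$ gives the identity for the $\lambda$-bracket. One must keep track of degrees: every term sits in the same filtration level, which is what makes the reduction meaningful. Here we use that the product on $\operatorname{Gr}(A)$ is the commutative associative one from \cite{LHS}, so the differential-algebra Rota--Baxter condition is exactly the reduced pre-Lie one.

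For (c), we compare two constructions on homogeneous elements $\bar x\in A_n/A_{n-1}$ and $\bar y\in A_m/A_{m-1}$. The filtration construction (the preceding proposition applied via (a)) gives $\bar x\ast\bar y=\overline{x\triangleright y}=\overline{T(x)\circ y}$ and $\bar x_\lambda\bar y=\overline{[T(x)_\lambda y]}$. The Rota--Baxter construction on the Poisson vertex algebra $\operatorname{Gr}(A)$ gives $\bar x\ast\bar y=T(\bar x)\cdot\bar y=\overline{T(x)}\cdot\bar y=\overline{T(x)\circ y}$ and $\bar x_\lambda\bar y=[T(\bar x)_\lambda\bar y]=\overline{[T(x)_\lambda y]}$. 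These agree term by term, and the differentials coincide. The only subtle point, which I expect to be the main (though mild) obstacle, is matching the degrees. The first construction places $\ast$ in $A_{n+m}/A_{n+m-1}$ and the $\lambda$-operation in $A_{n+m-1}/A_{n+m-2}$, and the second must land in exactly the same graded pieces. This holds because $T$ preserves degree.
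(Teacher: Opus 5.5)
Your proposal is correct and is exactly the direct calculation the paper leaves implicit ("It follows by a direct calculation"). The $T$-invariance of each $A_n$ gives the filtration bounds for $\triangleright$, $\triangleleft$ and $\cdot\,_\lambda\,\cdot$, and lets $T$ descend degree-wise to $\operatorname{Gr}(A)$, where both Rota--Baxter identities reduce modulo the appropriate level. Both pre-Poisson structures are then $\overline{T(x)\circ y}$ and $\overline{[T(x)_\lambda y]}$ on homogeneous elements.
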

\begin{proof}
	It follows by a direct calculation.
\end{proof}	

\begin{ex}
Let  $A= \bigcup_{n=0}^{\infty} A_n$ be a  filtration of a pre-vertex algebra $(A,\triangleright ,\triangleleft,\cdot\,_{\lambda}\,\cdot,\partial)$ and let $({\rm Gr}(A),\ast,\cdot\,_{\lambda}\,\cdot,{\partial})$ be its corresponding pre-Poisson vertex algebra.  By Proposition \ref{pro:RB-embed}, there exists a non-unital vertex algebra $\huaA=A\oplus A'$, where $A'$ is the copy of $A$,  and it also has a filtration $\huaA= \bigcup_{n=0}^{\infty} (A_n\oplus A'_n)$. Then the Rota-Baxter $T:\huaA\rightarrow \huaA$ given by 
$$T|_{A} = 0, \quad T(x') = x, \quad \forall~x \in A$$ 
is compatible with the filtration filtration $\huaA= \bigcup_{n=0}^{\infty} (A_n\oplus A'_n)$. Hence, $\operatorname{Gr}(\huaA)$ is a Poisson vertex algebra with the induced Rota-Baxter operator $T$. Consequently, the pre-Poisson vertex algebra $({\rm Gr}(A),\ast,\cdot\,_{\lambda}\,\cdot,{\partial})$ is a subalgebra of the pre-Poisson vertex algebra obtained from $\operatorname{Gr}(\huaA)$. 
\end{ex}

\subsection{Pre-Poisson vertex algebras and deformation quantizations via Zinbiel algebras}

In this section, we introduce the notion of a pre-vertex formal deformation of a differencial Zinbiel algebra and show that pre-Poisson vertex algebras are the corresponding classical limits.

Let $A$ be a pre-Lie conformal algebra. Then $A[[\hbar]]$ is the completion of the $\mathbb{\Comp}[\hbar] $-module with respect to the $\hbar$-adic topology. It is straightforward to extend the pre-Lie conformal algebra $\lambda$-operation $\circ_\lambda$ to a $ ~\mathbb{\Comp}[[\hbar]] $-bilinear  $\lambda$-operation  $ \circ^\hbar_{\lambda}$ on $ A[[\hbar]] $ by
\begin{equation}
	(\sum_{i\geq 0}x_i\hbar^i)\circ^\hbar_{\lambda}	(\sum_{j\geq 0}y_j\hbar^j)=\sum_{i,j\geq 0}(x_i\circ_\lambda y_j)\hbar^{i+j}\in A[[\hbar]] [[\lambda]].
\end{equation}
Since $	(\sum_{i\geq 0}x_i\hbar^i)\circ^\hbar_{\lambda}	(\sum_{j\geq 0}y_j\hbar^j)$ generally involves  infinite number of powers of $\lambda$, $(A[[\hbar]] ,\circ^\hbar_{\lambda}	)$ is not a pre-Lie conformal algebra. However,  for any $n\in \Nat$, the quotient space $A[[\hbar]] /\hbar^n A[[\hbar]]$ can induce a pre-Lie conformal algebra naturally. It is natural to give the following notion of $\hbar$-adic pre-Lie conformal algebra.
\begin{defi}
	An {\bf $\hbar$-adic pre-Lie conformal algebra}  is a $ \mathbb{\Comp}[[\hbar]] $-module $A[[\hbar]] $, where $A$
	is a vector space, equipped with the $\hbar$-adic topology, a $ \mathbb{C}[\partial] $-module structure and a continuous $ \mathbb{\Comp}[[\hbar]] $-bilinear $\lambda$-operation  $ \circ^\hbar_{\lambda}:A[[\hbar]]\times A[[\hbar]] \rightarrow  A[[\hbar]] [[\lambda]]$  such that for any $n\in \Nat$, $(A[[\hbar]] /\hbar^n A[[\hbar]], \bar{\circ}^\hbar_{\lambda})$ is a  pre-Lie conformal algebra over $ \mathbb{\Comp}[[\hbar]] $, where $\bar{\circ}^\hbar_{\lambda}$ is given by
	$$(x+ \hbar^n A[[\hbar]])\bar{\circ}^\hbar_{\lambda}	(y+\hbar^n A[[\hbar]])=x\circ^\hbar_{\lambda} y+\hbar^n A[[\hbar]],\quad \forall~x,y\in A[[\hbar]].$$
\end{defi}	
\begin{rmk}
	Let $(A[[\hbar]] , \circ^\hbar_{\lambda})$ be an $\hbar$-adic pre-Lie conformal algebra. For any pair of nonnegative integers $i$ and $j$ with $i\leq j$, the natural map $\pi_{ij}$ from $A[[\hbar]] /\hbar^j A[[\hbar]]$ onto $A[[\hbar]] /\hbar^i A[[\hbar]]$ is a pre-Lie  conformal algebra homomorphism. The pre-Lie  conformal algebras $A[[\hbar]] /\hbar^n A[[\hbar]]$ together with these pre-Lie conformal algebra homomorphisms form an inverse system of pre-Lie conformal algebras over $ \mathbb{\Comp}[[\hbar]] $. Then the $\hbar$-adic pre-Lie conformal algebra $(A[[\hbar]] , \circ^\hbar_{\lambda})$  can be seen as an inverse limit of the pre-Lie conformal algebras $A[[\hbar]] /\hbar^n A[[\hbar]]$ over $ \mathbb{\Comp}[[\hbar]] $.
\end{rmk}	

\begin{defi}
 An {\bf $\hbar$-adic non-unital pre-vertex algebra} is quintuple $(A[[\hbar]],\triangleright_\hbar,\triangleleft_\hbar,\circ_{\lambda}^{\hbar},\partial)$, where  $(A[[\hbar]],\circ_{\lambda}^{\hbar},\partial)$ is an $\hbar$-adic pre-Lie conformal algebra and $(A[[\hbar]],\triangleright_\hbar,\triangleleft_\hbar,\partial)$ is a differential L-dendriform algebra such that  for any $n\in \Nat$, $(A[[\hbar]] /\hbar^n A[[\hbar]], \bar{\triangleright}_\hbar,\bar{\triangleleft}_\hbar,\bar{\circ}^\hbar_{\lambda},\partial)$ is a non-unital pre-vertex algebra over $ \mathbb{\Comp}[[\hbar]]$, where $\bar{\triangleright}_\hbar,\bar{\triangleleft}_\hbar$ are given by 
	$$(x+ \hbar^n A[[\hbar]])\bar{\triangleright}^\hbar_{\lambda}	(y+\hbar^n A[[\hbar]])=x\triangleright^\hbar_{\lambda} y+\hbar^n A[[\hbar]],\, (x+ \hbar^n A[[\hbar]])\bar{\triangleleft}^\hbar_{\lambda}	(y+\hbar^n A[[\hbar]])=x\triangleleft^\hbar_{\lambda} y+\hbar^n A[[\hbar]].$$
\end{defi}

\begin{defi}
	Let $(A,\ast,\partial)$ be a differential Zinbiel algebra. A {\bf pre-vertex  formal deformation} of $ A $ consists of 
	 continuous $ \mathbb{\Comp}[[\hbar]] $-bilinear operations $ \triangleright_\hbar,\triangleleft_\hbar:A[[\hbar]]\times A[[\hbar]] \rightarrow  A[[\hbar]] $ and  $\lambda$-operation  $ \circ^\hbar_{\lambda}:A[[\hbar]]\times A[[\hbar]] \rightarrow  A[[\hbar]] [[\lambda]]$  such that $(A[[\hbar]],\triangleright_\hbar,\triangleleft_\hbar,\circ_{\lambda}^{\hbar},\partial)$ carries the structure of an $\hbar$-adic non-unital  pre-vertex algebra over $ \mathbb{\Comp}[[\hbar]] $ satisfying
	$$x \triangleright_\hbar y  = y \triangleleft_\hbar x =x\ast y~\pmod{\hbar},\qquad 
	x\circ_\lambda^\hbar y =0 \pmod{\hbar}.
	$$
\end{defi}

Let $(A[[\hbar]],\triangleright_\hbar,\triangleleft_\hbar,\circ_{\lambda}^{\hbar},\partial)$ be an $\hbar$-adic non-unital pre-vertex algebra. We set  
\begin{eqnarray*}
x\triangleright_\hbar y&=&x\ast y+(x\triangleright_1y)\hbar+(x\triangleright_2y)\hbar^2+\cdots,\\
x\triangleleft_\hbar y&=&y\ast x+(x\triangleleft_1y)\hbar+(x\triangleleft_2y)\hbar^2+\cdots,\\
x\circ_{\lambda}^{\hbar} y &=& \sum_{n=1}^{\infty} \hbar^n m_{n\lambda}(x,y),
\end{eqnarray*}
where $x\triangleright_ny, x\triangleleft_ny:A\times A\to A$ and  $m_{n\lambda}: A \times A \to A[\lambda]$ for \(n \ge 1\) are \(\mathbb{C}\)-bilinear maps.

\begin{thm}
	Let $(A[[\hbar]],\triangleright_\hbar,\triangleleft_\hbar,\circ_{\lambda}^{\hbar},\partial)$ be a pre-vertex formal deformation of a  differential Zinbiel algebra $(A,\ast,\partial)$. Then $(A,\ast,\cdot\,_\lambda\,\cdot,\partial)$ is a pre-Poisson vertex algebra, where the $\lambda$-operation $\cdot\,_\lambda\,\cdot:A\times A\to A[\lambda]$ is defined by 
	$$x_\lambda y=m_{1\lambda}(x,y),\quad \forall x,y\in A,$$ 
	which is called {\bf the classical limit} of $(A[[\hbar]],\triangleright_\hbar,\triangleleft_\hbar,\circ_{\lambda}^{\hbar},\partial)$. The $\hbar$-adic non-unital pre-vertex algebra $ A[[\hbar]]$ is called a {\bf deformation quantization} of $(A,\ast,\cdot\,_\lambda\,\cdot,\partial)$.
\end{thm}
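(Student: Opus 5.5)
We expand each axiom of the $\hbar$-adic non-unital pre-vertex algebra $A[[\hbar]]$ in powers of $\hbar$ and compare the lowest nontrivial coefficients. The key observation is that every pre-vertex axiom with an integral term has the form ``(bilinear in $\triangleright_\hbar,\triangleleft_\hbar$ and $\circ^\hbar_\lambda$) $=$ (quadratic in $\circ^\hbar_\lambda$)''. Since $\circ^\hbar_\lambda = \hbar\, m_{1\lambda}+O(\hbar^2)$, the integral terms are $O(\hbar^2)$. Hence the coefficient of $\hbar^1$ drops them and yields exactly the pre-Poisson identities, just as in the filtration argument for $\operatorname{Gr}(A)$ in the preceding subsection.

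The steps, in order, are as follows.

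\textbf{Step 1: the Zinbiel structure.} At $\hbar^0$, $(A,\ast,\partial)$ is a differential Zinbiel algebra by hypothesis, so nothing is needed.

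\textbf{Step 2: the pre-Lie conformal structure.} For each $n$, $A[[\hbar]]/\hbar^n A[[\hbar]]$ is a pre-Lie conformal algebra. Sesquilinearity \eqref{eq:conf} for $\circ^\hbar_\lambda$, read at order $\hbar^1$, gives sesquilinearity for $m_{1\lambda}$. Since $\partial$ is $\hbar$-independent, $(\partial x)_\lambda y=-\lambda\, x_\lambda y$ and $x_\lambda\partial y=(\partial+\lambda)x_\lambda y$.

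The left-symmetry \eqref{eq:lsc} is quadratic in $\circ^\hbar_\lambda$, so its lowest coefficient is at $\hbar^2$. That coefficient involves only $m_{1}$ and gives \eqref{eq:lsc} for $x_\lambda y=m_{1\lambda}(x,y)$. We would work in the quotient by $\hbar^3$ to make this precise.

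Polynomiality in $\lambda$ also needs care. The operation $\circ^\hbar_\lambda$ on $A[[\hbar]]/\hbar^2$ takes values in polynomials in $\lambda$, since it is a pre-Lie conformal algebra. Hence $m_{1\lambda}(x,y)\in A[\lambda]$, as already asserted in the setup.

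\textbf{Step 3: identity \eqref{eq:pre-PVA1}.} We take \eqref{eq:pre-VA3} in $A[[\hbar]]/\hbar^2$. The left side $x\circ^\hbar_\lambda(y\triangleright_\hbar z)$ equals $\hbar\, x_\lambda(y\ast z)$ modulo $\hbar^2$. On the right, $(x_\lambda y-y_{-\lambda-\partial}x)\triangleright_\hbar z$ gives $\hbar\,(x_\lambda y-y_{-\lambda-\partial}x)\ast z$, and $y\triangleright_\hbar(x\circ^\hbar_\lambda z)$ gives $\hbar\, y\ast(x_\lambda z)$. The integral term is $O(\hbar^2)$. Dividing by $\hbar$ yields \eqref{eq:pre-PVA1}.

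\textbf{Step 4: identity \eqref{eq:pre-PVA2}.} We apply the same procedure to \eqref{eq:pre-VA2}, using $y\triangleright_\hbar z+y\triangleleft_\hbar z\equiv y\ast z+z\ast y$ and $u\triangleleft_\hbar z\equiv z\ast u$ modulo $\hbar$. At order $\hbar$ this gives
\[
(y\ast z+z\ast y)_{-\lambda-\partial}x = z\ast(y_{-\lambda-\partial}x)+y\ast(z_{-\lambda-\partial}x),
\]
which is \eqref{eq:pre-PVA2} after renaming variables.

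\textbf{Step 5: consistency checks.} As sanity checks, \eqref{eq:pre-VA4} and \eqref{eq:pre-VA1} at order $\hbar$ should give identities consistent with the above. Equation \eqref{eq:pre-VA1} at $\hbar^0$ reads $x\ast y=x\ast y$.

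\textbf{Main obstacle.} The main point needing care is bookkeeping rather than algebra. Every term in \eqref{eq:pre-VA2} and \eqref{eq:pre-VA3} must be shown to be divisible by $\hbar$. The division by $\hbar$ must also be legitimate, which it is because $A[[\hbar]]$ is $\hbar$-torsion free: an identity holding modulo $\hbar^2$ between elements of $\hbar A[[\hbar]]$ gives an identity of the $\hbar^1$-coefficients in $A$. In addition, the substitution $\lambda\mapsto -\lambda-\partial$ must commute with taking $\hbar$-coefficients. This holds because $\partial$ is $\mathbb{C}[[\hbar]]$-linear and preserves $A$.
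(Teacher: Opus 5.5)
Your proposal is correct and follows the paper's proof: expand the axioms in powers of $\hbar$, get the pre-Lie conformal structure on $m_{1\lambda}$ from the $\hbar^1$ (sesquilinearity) and $\hbar^2$ (left-symmetry) coefficients, and read off \eqref{eq:pre-PVA2} and \eqref{eq:pre-PVA1} from the $\hbar^1$ coefficients of \eqref{eq:pre-VA2} and \eqref{eq:pre-VA3}, where the integral terms are $O(\hbar^2)$. The only difference is that the paper writes out \eqref{eq:pre-VA4}, which gives \eqref{eq:pre-PVA1} with $y,z$ swapped, and says \eqref{eq:pre-VA3} works similarly, whereas you use \eqref{eq:pre-VA3} directly and are more explicit about the $\hbar$-torsion-freeness bookkeeping.
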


\begin{proof}
	Since \((A[[\hbar]], \circ_\lambda^\hbar\cdot,\partial )\) is an \(\hbar\)-adic pre-Lie conformal algebra, we have
	\[
	(x \circ_\lambda^\hbar y) \circ_{\lambda+\mu}^\hbar z - x \circ_\lambda^\hbar (y \circ_\mu^\hbar z)
	= (y \circ_\mu^\hbar x) \circ_{\lambda+\mu}^\hbar z - y \circ_\mu^\hbar (x \circ_\lambda^\hbar z).
	\]  
	Expanding both sides in powers of \(\hbar\) and comparing the coefficients of \(\hbar^2\), we get
	\[
	m_{1(\lambda+\mu)}\big(m_{1\lambda}(x,y),z\big)- m_{1\lambda}(x, m_{1\mu}(y,z))
	= m_{1(\lambda+\mu)}\big(m_{1\mu}(y,x),z\big)- m_{1\mu}(y, m_{1\lambda}(x,z)).
	\]  
	Also, by the fact that $x\circ_\lambda^\hbar (\partial y)=(\partial+\lambda)x\circ_\lambda^\hbar y$ and $(\partial x)\circ_\lambda^\hbar y=-\lambda x\circ_\lambda^\hbar y$, we have 
$$	m_{1\lambda}(x,\partial y)=(\partial+\lambda)m_{1\lambda}(x,y),\quad m_{1\lambda}(\partial x,y)=-\lambda m_{1\lambda}(x,y).$$
Hence  $(A,\cdot\,_\lambda\,\cdot=m_{1\lambda},\partial)$ is a pre-Lie conformal algebra.
	
	By \eqref{eq:pre-VA2}, we have
	\[
	(y \triangleright_\hbar z + y \triangleleft_\hbar z)_{-\lambda-\partial}x
	= (y_{-\lambda-\partial}x) \triangleleft_\hbar z + y \triangleright_\hbar (z_{-\lambda-\partial}x)
	- \int_0^\lambda z_{-\mu-\partial}(y_{-\lambda-\partial}x) \, d\mu .
	\]  
	The left-hand side becomes  
	\[
	\hbar\, m_{1(-\lambda-\partial)}(y*z+z*y, x) + O(\hbar^2),
	\]  
	while the right-hand side becomes
	$$\hbar\Big(z*m_{1(-\lambda-\partial)l}(y,x)  + y * m_{1(-\lambda-\partial)}(z,x)\Big)+ O(\hbar^2). $$
	Comparing the coefficients of \(\hbar\) on the both sides, we obtain  
	\[
	m_{1(-\lambda-\partial)}(y*z+z*y, x) = z*m_{1(-\lambda-\partial)l}(y,x)  + y * m_{1(-\lambda-\partial)}(z,x).
	\]  
	which implies that condition \eqref{eq:pre-PVA2}  in the definition of pre-Poisson vertex algebra holds  with $x_\lambda y=m_{1\lambda}(x,y)$.

By \eqref{eq:pre-VA3}, we have 
	\[
	x\circ_\lambda (y \triangleleft_\hbar z) = y \triangleleft_\hbar (x\circ_\lambda z - z\circ_{-\lambda-\partial}x) + (x\circ_\lambda y) \triangleleft_\hbar z
	- \int_0^\lambda z\circ_{-\mu-\partial}(x\circ_\lambda y) \, d\mu .
	\]  
	 The left-hand side becomes  
	$$\hbar\, m_{1\lambda}(x, z * y) + O(\hbar^2).$$
	while the right-hand side becomes
	$$\hbar\Big( (m_{1\lambda}(x,z)-m_{1(-\lambda-\partial)}(z,x))*y +z* (x_\lambda y)\Big)+ O(\hbar^2).$$
	Comparing the coefficients of \(\hbar\) on the both sides, we obtain  
$$
	m_{1\lambda}(x, z * y) =  (m_{1\lambda}(x,z)-m_{1(-\lambda-\partial)}(z,x))*y +z* (x_\lambda y) ,
$$
	which implies that  \eqref{eq:pre-PVA1}  in the definition of pre-Poisson vertex algebra holds.
	
Similarly,  the condition \eqref{eq:pre-VA4} in the definition of pre-vertex algebra also implies \eqref{eq:pre-PVA1}  in the definition of pre-Poisson vertex algebra. Thus $(A,\ast,\cdot\,_\lambda\,\cdot,\partial)$ is a pre-Poisson vertex algebra.
\end{proof}

\noindent
{\bf Acknowledgements.} 
We give our warmest thanks to P.S. Kolesnikov for very useful comments and discussions. This research is supported by NSFC (W2412041,12371029) and the National Key Research and Development Program of China (2021YFA1002000).

 \end{document}